\newtheorem{theorem}{Theorem}[section]
\newtheorem{proposition}[theorem]{Proposition}
\newtheorem{lemma}[theorem]{Lemma}
\newtheorem{remark}[theorem]{Remark}
\newtheorem{assumption}[theorem]{Assumption}
\newtheorem{example}[theorem]{Example}
\makeatletter \@addtoreset{equation}{section} \makeatother
\newcommand{\pp}{\partial_+}
\newcommand{\pn}{\partial_-}
\def\tilde{\widetilde}
\newcommand{\ga}{\gamma}
\newcommand{\beq}{\begin{equation}}
\newcommand{\eeq}{\end{equation}}
\newcommand{\eps}{\varepsilon}
\def\TS{\textstyle}
\def\com#1{\quad{\textrm{#1}}\quad}
\def\eq#1{(\ref{#1})}
\def\nn{\nonumber}
\def\ol{\overline}
\def\dbyd#1{\TS{\frac{\partial}{\partial#1}}}
\begin{document}

\title{Singularity formation for the compressible Euler equations}

\author[G. Chen]{Geng Chen}\address{Department of Mathematics,
University of Kansas, Lawrence, KS 66044 USA}
\email{\tt gengchen@ku.edu}
\author[R. Pan]{Ronghua Pan}\address{School of Mathematics,
Georgia Institute of Technology, Atlanta, GA 30332 USA}
\email{\tt panrh@math.gatech.edu}
\author[S. Zhu]{Shengguo Zhu}\address{ Department of Mathematics, Shanghai Jiao Tong University, Shanghai 200240, P.R.China}
\email{\tt zhushengguo@sjtu.edu.cn}

\date{March 31, 2015}
\subjclass[2010]{76N15, 35L65, 35L67}

\keywords{ Singularity formation,
compressible Euler equations, p-system, conservation laws, large data.}

\begin{abstract}
It is well-known that singularity will develop in finite time for
hyperbolic conservation laws from initial nonlinear compression no matter how
small and smooth the data are. Classical results, including 
Lax \cite{lax2}, John \cite{john},
 Liu \cite{Liu1}, Li-Zhou-Kong \cite{lizhoukong0}, confirm that when initial data are
small smooth perturbations near
constant states, blowup in gradient of solutions occurs in finite time if initial data contain 
any compression in some truly nonlinear characteristic field, under some structural conditions.
A natural question is that: Will this picture keep true for
large data problem of physical systems such as compressible Euler equations?
One of the key issues is how to find an
effective way to obtain sharp enough control on density lower bound, which is known to decay to zero as time goes to infinity for certain class of solutions. In this paper, we offer a simple way to characterize the decay of density lower bound in time, and therefore successfully classify the questions on singularity formation in compressible Euler equations.
For isentropic flow, we offer a
complete picture on the finite time singularity formation from smooth initial
data away from vacuum, which is consistent with the small data theory.
   For adiabatic flow, we show a striking observation that
initial weak compressions do not necessarily develop singularity in finite time. Furthermore, we follow \cite{G6} to introduce the critical 
strength of nonlinear compression, and prove that if the compression is 
stronger than this critical value, then singularity develops in finite time, 
and otherwise there are a class of initial data admitting global smooth solutions 
with maximum strength of compression equals to this critical value. 
\end{abstract}

\maketitle

\section{Introduction}

The compressible Euler equations are the oldest system of nonlinear PDEs modeling the motion of gases.
Under Lagrangian coordinates, the compressible Euler equations in one space dimension take 
the following form

\begin{align}
\tau_t-u_x&=0\,,\label{lagrangian1}\\
u_t+p_x&=0\,,\label{lagrangian2}\\
\Big(\frac{1}{2}u^2+ \mathcal{E} \Big)_t+(u\,p)_x&=0\,, \label{lagrangian3}
\end{align}
where $x$ is the Lagrangian spatial variable, $t\in\mathbb{R}^+$ is the time. $\tau=\rho^{-1}$ denotes the specific volume for the density $\rho$.
$p$, $u$ and $\mathcal{E}$ stand for the pressure, the velocity, and the  specific  internal energy, respectively. For polytropic ideal gases, it holds that 
\beq
   p=K\,e^{\frac{S}{c_v}}\,\tau^{-\gamma},\ \  \mathcal{E}=\frac{p\tau}{\gamma-1}\, , \ \ \gamma>1\,,
\label{introduction 3}
\eeq
where $S$ is the entropy, $K$ and $c_v$ are positive constants,
see~\cite{courant} or \cite{smoller}.  For most gases, the adiabatic exponent $\gamma$ lies between $1$ and $3$, that is $1<\gamma<3$.

For $C^1$ solutions,
it follows that (\ref{lagrangian3}) is equivalent to the ``entropy
equation'':
\beq
   S_t=0\,.
\label{s con}
\eeq
Therefore, when entropy is constant, the flow is called isentropic, then
(\ref{lagrangian1}) and (\ref{lagrangian2}) become a closed system,
known as the $p$-system (or isentropic Euler equations)
\begin{align}
\tau_t-u_x&=0\,,\label{p1}\\
u_t+p_x&=0\,,\label{p2}
\end{align}
with
\beq\label{p3}
   p=K_1\,\tau^{-\gamma}\,,
\eeq
where,  $K_1>0$ is a constant.

Compressible Euler equations (1.1)--(1.3)  and p-system (1.6)--(1.7)
are two of the most important physical models for hyperbolic conservations laws
\beq\label{CL}
{\bf u}_t+{\bf f}({\bf u})_x=0\,,
\eeq
where
${\bf u}={\bf u}(x,t)\in
\mathbb{R}^n$ is the unknown vector and  ${\bf f}:\mathbb{R}^n\rightarrow\mathbb{R}^n$
is the nonlinear flux. 
It is a general belief that system (\ref{CL})
typically develops discontinuity singularity, i.e. shock wave, no matter how small and smooth the initial data are. This belief has been justified in a series of beautiful works by Lax \cite{lax2} in 1964 for general systems with two unknowns, and by 
\cite{john, lizhoukong0,lizhoukong,Liu1} for general $n\times n$ systems. These results confirm that for general strictly hyperbolic systems, if the initial datum is a generic small smooth perturbation near a constant equilibrium, then the initial compression (negative spatial derivatives of gradient variables) in any truly nonlinear (not weakly linearly degenerate \cite{lizhoukong0}) characteristic field develops singularity in finite time. Such lack of
regularity is the major difficulty in analyzing these systems. With enormous efforts, the well-posedness theories of
small total variation
solutions for \eqref{CL} including compressible Euler equations and p-system are fairly well understood \cite{Bressan,Dafermos, Glimm}. The next natural question is on the theory of large data, which is, however, widely open. Even for 
some important physical systems, such as compressible Euler equations and p-system, the basic question, like if singularity will form in finite time, is not completely understood when the smallness condition on the initial data is missing. We will address this open problem in this paper for p-system and full compressible Euler equations.  

The beautiful result of Lax \cite{lax2} along with some expositions such as \cite{Evans} left readers an impression that, at least for p-system, for $C^1$ initial data away from vacuum, singularity will form 
in finite time if and only if there is some compression (negative spatial derivatives of gradient variables, we refer the readers to Remark 2.4 below for the definition) initially, without smallness assumption. This, however, 
is not quite accurate. When adopting \cite{lax2} to p-system, the control on a crucial term $\frac{1}{c(v)}$ for the sound speed $c(v)=\sqrt{-p_v}$  is very important. This term is singular if density tends to zero. On the other hand,
the $L^\infty$ estimate through Riemann invariants offers an upper bound of density, without control on the lower bound. In the case of small solutions, one can actually choose the smallness of the perturbation carefully, so that the perturbation remains small comparing to the positive lower bound of initial density uniformly in time. Such choice of smallness gives a positive constant lower bound (say one half of the lower bound of  initial density) for density.  However, when initial data are large, this becomes a serious issue. In general, it is not possible to have a positive constant lower bound for density. Indeed,  a Riemann problem connecting two extreme sides of
two interacting strong rarefaction waves generates vacuum instantaneously when $t>0$,  \cite{smoller}.
Smoothing out this data implies the existence of a $C^1$-solution such that $\inf_{x}\rho(x,t)\rightarrow0$
as $t\rightarrow+\infty$. An example of Lipschitz continuous solutions can be found in the Section 82 in \cite{courant} using a method originally discussed in \cite{Riemann}, where the density decays in time at a rate of $\frac{O(1)}{1+t}$. If one looks into this problem 
more carefully, the argument in \cite{lax2} is valid only for p-system with large initial data and with pressure law (\ref{p3}) when $\gamma\ge 3$, which does not include the most practical case $1<\gamma<3$ in gas dynamics. In fact, when $\gamma\ge 3$, the control of lower bound of density is not needed, see  also a generalization to full Euler equations by Chen, Young and Zhang \cite{G6}. Therefore, the real matter of the open problem is to establish the finite time singularity formation for 
both p-system and full compressible Euler equations for the most physical case $1<\gamma<3$. A more 
in-depth discussion on Lax's result \cite{lax2} will be presented in section 2 of this paper. 

The main purpose of this paper is to establish the finite time singularity formation result for both p-system and full Euler equations without the smallness assumption on initial data, when gases are in physical regime, i.e., 
$1<\gamma<3$. We introduce a brand new elementary and neat approach to establish the time-dependent density lower bound, which is good enough to achieve our characteristic analysis leading to the finite time 
singularity formation results even when initial data are large. 

For isentropic flow with $\gamma$-law pressure, our result shows that if the initial datum is smooth with a positive lower bound for density, then the classical solution of the Cauchy problem of p-system breaks down in finite time {\bf if and only if} there is an initial compression. The precise statement is in Theorem \ref{p_sing_thm}, and the definitions of rarefaction and compression are given in Remark \ref{rem2.4}. We emphasize that the approach introduced in the proof of this theorem is neat and elementary, but also very powerful. The key new estimates are given by Lemmas \ref{lemma_p_2} and \ref{density_low_bound_1-3}. Note the time dependent lower bound we proved for density is not in the optimal order, but it is good enough  for the singularity formation problem. Furthermore, this approach is applicable to the full 
Euler equations for non-isentropic flows. 

We now make a brief remark on the time-dependent lower bound for density. In many literatures, 
using Eulerian formulation, and mass equation, the following estimate of density 
$$\displaystyle \inf_x \rho_0(x) \exp\{-\int_0^t \|u_{x'}(\cdot, \sigma)\|_{L^\infty}\ d\sigma\} \le \rho(x, t), $$
has been obtained for $x'$ the Eulerian space variable, c.f. \cite{LiBook}. We note that, since there is a possibility that the blowup of gradient of $u$ and the vanishing of density may happen at the same time, it is very difficult to use this estimate in the argument of proving global regularity or singularity formation. When initial data are purely rarefactive (see Remark 2.4 below for definition), L. Lin \cite{lin2} proved that the density of  any Lipschitz solution of p-system has a 
positive lower bound of order $\frac{1}{1+t}$ through a relatively complicated approximation 
generated by a polygonal scheme. This result, however, does not apply to the case when initial data contain compression, which is the mechanism for singularity formation. One of the main contributions of this paper is to provide a good enough new time-dependent estimate on the density lower bound for generic $C^1$ initial data away from vacuum, when $1<\gamma<3$. The idea we developed here is simple and neat, but does not offer the optimal rate $\frac{1}{1+t}$, which is achieved through a much more complicated method in our preprint \cite{CPZ} for generic $C^1$ initial data away from vacuum. 

From the discussion above, we see that for isentropic flow, the singularity formation theory is not different no matter the data is small or large. One may thus expect a similar picture for the non-isentropic flow. However, the life is very complicated for non-isentropic flow. When initial data are small, under a so-called nonlinear wave condition,
\cite{lizhoukong0,lizhoukong,Liu1} showed that if the initial datum is a generic small smooth perturbation near a constant equilibrium, then the initial compression in any truly nonlinear (not weakly linearly degenerate \cite{lizhoukong0}) characteristic field develops singularity in finite time, like in the p-system. When initial data are large, this expectation is not true for full Euler equations. 
In Section 3.5, we will provide an explicit example showing that for certain class of non-trivial initial data, which might be even periodic in space variable with non-zero derivatives, global classical solutions exist. This is in a sharp contrast to the isentropic case, where the non-trivial periodic initial data lead to finite time singularity formation of classical solutions.  We remark that this class of initial data do not satisfy the so-called nonlinear wave condition in the blowup results of \cite{Liu1}. Therefore, in order to prove finite time singularity formation results for full Euler equations with large data, it is natural to impose some conditions to exclude this class of initial data, see also some related discussion in \cite{G6}.  In Section 3, we will identify such kind of conditions and successfully establish the finite time singularity formation results when initial compression is merely stronger than a critical value, which can be attained by the global classical solutions constructed in our example. More detailed discussion will also be provided at the end of this paper.  For full Euler system, singularity formation results were proved in \cite{G3, G6} for $\gamma\ge 3$ when the density lower bound is not needed, and for $1<\gamma<3$ with the help of an {\it a priori assumption } on the density 
lower bound. For small solutions, similar to isentropic case, smallness conditions give a positive 
constant lower bound for density, say one half of initial density lower bound. For large solutions, sufficiently good time-dependent density lower bound estimate is needed for the proof of finite time singularity formation, when $1<\gamma<3$. Our idea introduced for isentropic case can be generalized to full Euler system, and gives a good enough estimate on density lower bound. The singularity formation for p-system with general pressure law is discussed in the appendix.
 
When some restrictions, such as compactness of support of the initial data near certain constant equilibrium, are imposed, there are some wonderful results on the finite time singularity formation for compressible Euler equations in higher space dimensions. We refer the readers to some of these results, see \cite{tms1,rammaha,sideris} for classical compressible Euler equations, and see \cite{Chris, ps} for relativistic
Euler equations. The results in this paper in one space dimension offer more complete and clear pictures on 
the mechanism, occurrence, and the type of singularity formations.

\section{Singularity formation for p-system}\label{section2}
In this section, we study singularity formation 
for p-system \eqref{p1}$\sim$\eqref{p3}. The proof of our main theorem (Theorem \ref{p_sing_thm}) 
is based on the study of Lax's characteristic decomposition established first for general hyperbolic system with two unknowns in \cite{lax2}. For the readers' convenience, in Subsection \ref{subsection2.1}, we first review this well-known result of Lax \cite{lax2}. 
Then in Subsection
\ref{subsection2.2}, we present a careful adoption of  Lax's method in \cite{lax2} to p-system with $\gamma$-law pressure. We will then explain why Lax's result \cite{lax2} for small smooth initial data for 
general $2\times 2$ system actually offers the singularity formation result for p-system without smallness 
restrictions on initial data for $\gamma$-law pressure provided that $\gamma\geq 3$. We will also spell out why his result does not include the most physical cases when $1<\gamma<3$. In the latter case, a careful study 
on the lower bound of density is needed, which is achieved in Subsection \ref{subsection2.3}, leading to 
the first main result of this paper Theorem \ref{p_sing_thm}. 

\subsection{Lax's result for $2\times 2$ systems\label{subsection2.1}}

This part is basically taken from Lax's paper \cite{lax2} in 1964. Consider a system of two first-order partial differential equations
\beq\label{22_1}
\begin{split}
u_t+f_x=0\,,\\
v_t+g_x=0\,,
\end{split}
\eeq
where $f$ and $g$ are functions of $u$ and $v$. Carrying out the differentiation in \eqref{22_1}, we obtain
\beq\label{22_2}
{\bf u}_t
+A\,
{\bf u}_x=0\,,
\eeq
where
\[{\bf u}=
\left(
\begin{array}{l}
u\\
v
\end{array}\right)
\qquad
\text{and}
\qquad
A=\left(
\begin{array}{lr}
f_u&f_v\\
g_u&g_v
\end{array}\right)\,.
\]
Suppose that this system is strictly hyperbolic, i.e. 
the matrix $A$ has real and distinct eigenvalues $\lambda<\mu$
for relevant values of $u$ and $v$. Use ${\bf l}_{\lambda}$ and ${\bf l}_{\mu}$
to denote the left eigenvectors of $A$ corresponding to  eigenvalues $\lambda$ and $\mu$,
respectively.

Multiplying \eqref{22_2} by ${\bf l}_{\lambda}$ and ${\bf l}_{\mu}$ respectively, we have 
\[
{\bf l}_{\lambda}\cdot {\bf u}'=0\,,\qquad {\bf l}_{\mu}\cdot {\bf u}^\backprime=0\,,
\]
where we denote
\[
\prime=\partial_t+\lambda\partial_x\,,\qquad \backprime=\partial_t+\mu\partial_x\,.
\]
Suppose there exist integrating factors $\phi$ and $\psi$  such that $\phi {\bf I}_{\lambda} =\nabla_{(u,v)} w(u, v)$, and $\psi {\bf I}_{\mu} =\nabla_{(u,v)} z(u, v)$, and therefore
\beq\label{22_3}
w'=\phi_\lambda\,{\bf l}_{\lambda}\cdot {\bf u}'=0,\,  \  z^\backprime=0.
\eeq
for some functions $w(u,v)$ and $z(u,v)$, which are called Riemann invariants
along characteristics with characteristic speeds $\lambda$ and $\mu$, respectively. Therefore, the $L^\infty$ norms on $w$ and $z$ are bounded by the initial data. We remark that 
such $\phi$ and $\psi$ always exist at least locally. Thus, for general hyperbolic systems with two unknowns, there always exist two Riemann invariants
for different families,
if we restrict the initial data to be a small perturbation near a constant equilibrium. This is also one 
of the reasons that Lax's result in \cite{lax2} is a small data theory. For many general systems, such 
as  p-system, Riemann invariants are naturally well-defined globally, therefore, the smallness restriction 
is not an issue for this step. 

We focus on $w$, the case on $z$ can be treated in a similar manner. Differentiating  $w'=0$  in \eqref{22_3} on $x$, we have 
\beq\label{22-4}
w_{tx}+\lambda w_{xx}+\lambda_{w} w_x^2+ \lambda_{z} w_x z_x=0\,.
\eeq
Also by  \eqref{22_3}, we observe from 
\[
0=z^{\backprime}=z'-(\lambda-\mu)z_x\,,
\]
that
\beq\label{22-5}
z_x=\frac{z'}{\lambda-\mu}\,.
\eeq
Substituting \eqref{22-5} into \eqref{22-4} and denoting 
\[\alpha:=w_x\,,\]
one finds
\beq\label{22-6}
\alpha'+\lambda_{w} \alpha^2+\frac{\lambda_{z}}{\lambda-\mu} z' \alpha=0\,.
\eeq

Let $h$ be a function of $w$ and $z$ satisfying
\[
h_z=\frac{\lambda_{z}}{\lambda-\mu} \,.
\]
Using $w'=0$ in  \eqref{22_3}, we have
\[
h'=h_w w'+h_z z'=\frac{\lambda_{z}}{\lambda-\mu} z'\,.
\]
This, together with (\ref{22-6}), gives
\beq\label{al_lax}
\alpha'+\lambda_{w} \alpha^2+ h' \alpha=0\,.
\eeq
Multiplying \eqref{al_lax} by $e^h$ and denoting
\[
\tilde{\alpha}:=e^h \alpha\,,
\]
we finally obtain
\beq\label{22_8}
\tilde{\alpha}'=-a {\tilde{\alpha}}^2\,
\eeq
with
\[
a:=e^{-h}\lambda_{w}\,.
\]

This Riccati type equation gives us a clear passage to study the singularity formation and/or
global existence of classical solutions for hyperbolic system with two unknowns. 
In fact, we could formally solve gradient variable $\tilde{\alpha}$ along a characteristic $x(t)$ defined by 
$$\frac{d x(t)}{dt}=\lambda,\ x(0)=x_0,$$
to obtain 

\[
\frac{1}{\tilde{\alpha}(x(t), t)}=\frac{1}{\tilde{\alpha}(x_0,0)}+\int_0^t a(x(\sigma), \sigma) ~d\sigma
\]
where the integral is taken along the characteristic  curve $x(t)$.

Note that $a\neq 0$ if $\lambda_w\neq 0$, which is corresponding to the nonlinearity of the system. One 
does not expect singularity formation for linearly degenerate fields \cite{Liu1}. For simplicity, suppose that $a$ is always non-zero, which is also satisfied by the solution of
p-system if initially $a\neq0$. To fix the idea, we only consider the case $a>0$. If 
$\tilde{\alpha}(0)<0$, i.e. initial solution is compressive somewhere in the $\lambda$ direction, then $\tilde{\alpha}(t)$ breaks down if there exists a time $t_*>0$ such that 
\beq\label{22_7}
\int_0^{t_*} a(x(\sigma), \sigma) ~d\sigma= -\frac{1}{\tilde{\alpha}(x_0,0)} \,.
\eeq
which could be relaxed to 
\beq\label{22_9}
\int_0^{\infty} a(x(\sigma), \sigma) ~d\sigma= \infty \,.
\eeq
In \cite{lax2}, Lax considered the hyperbolic system with uniformly 
strict hyperbolicity, i.e. characteristic speeds $\lambda$ and $\mu$ are uniformly away from each other.
With the help of smallness condition on initial data, there is a positive constant ${\bar a}$ such that $a\ge {\bar a}>0$, if the initial data are chosen so,  hence 
\eqref{22_9} is automatically justified. 

When smallness condition on the initial data is lacking, in principle, one expects the similar results 
following Lax \cite{lax2} if the Riemann invariants are defined globally, and \eqref{22_9} is satisfied. 
For p-system \eqref{p1}$\sim$\eqref{p3}, the Riemann invariants are defined globally, it remains to check 
\eqref{22_9}. We will explain how far Lax's theory can reach in the next subsection.

\subsection{Lax's large data theory on p-system: $\gamma\ge 3$.\label{subsection2.2}}

We adopt Lax's theory on singularity formation to the following Cauchy problem of p-system \eqref{p1}$\sim$\eqref{p3}, i.e., 
\begin{equation}\label{cp}
\begin{cases}
&\tau_t-u_x=0\,,\\
&u_t+p_x=0\,,   \ p=K_1\,\tau^{-\gamma}, \\
&\tau(x,0)=\tau_0(x),\ u(x, 0)=u_0(x),
\end{cases}
\end{equation}
where,  $K_1>0$ and $\gamma>1$ are constants. If the initial data are chosen to be a small smooth perturbation near a constant state $({\bar \tau}, {\bar u})$ with ${\bar \tau}>0$, then Lax's theory 
in \cite{lax2} applied directly. Our main concern in this subsection is how far it could reach when initial 
data are not small. 

From now on, we make the following assumption throughout the rest of Section \ref{section2}:
\begin{assumption}\label{p-assumption} Assume that $(\tau_0(x), u_0(x))$ are $C^1$ functions, and there are uniform positive constants $M_1$ and $M_2$ such that 
$$\|(\tau_0, u_0)(x)\|_{C^1}\le M_1,\ \tau_0\ge M_2.$$
\end{assumption}

A direct calculation shows that \eqref{cp} has two characteristic speeds 
$$\lambda=-\mu=-c,$$
where $c$ is the Lagrangian sound speed 
\beq
  c:=\sqrt{-p_\tau}=
  \sqrt{K_1\,\gamma}\,{\tau}^{-\frac{\gamma+1}{2}}\,.
\label{c def}
\eeq
The forward and backward characteristics are defined by
\[
  \frac{dx^{+}}{dt}=c \com{and} \frac{dx^{-}}{dt}=-c\,,
\]
respectively. We denote the corresponding directional derivatives along them by
\[
  \pp := \dbyd t+c\,\dbyd x \com{and}
  \pn := \dbyd t-c\,\dbyd x\,,
\]
respectively. 
Furthermore, introducing the following useful quantity, c.f.
\cite{G3}, 
\beq\label{3.00}
   \eta          := \int^\infty_\tau{c\,d\tau} =\TS\frac{2\sqrt{K_1\gamma}}{\gamma-1}\,
\tau^{-\frac{\gamma-1}{2}}>0\,,
\eeq
the globally defined Riemann invariants of \eqref{cp} are
\beq\label{3.0}
  r:=u-\,\eta \com{and} s:=u+\,\eta\,,
\eeq
which satisfy
\beq\label{3.1}
  \pp s=0 \com{and} \pn r=0\,,
\eeq
respectively.

Since $\eta$, $p$ and $c$ all are functions of $\tau$, their relations are as follows

\begin{equation}
  \tau=K_{\tau}\,\eta^{-\frac{2}{\gamma-1}}\,,\quad
  p=K_p\, \eta^{\frac{2\gamma}{\gamma-1}}\,,\ \quad
  c=\sqrt{-p_\tau}=K_c\,  \eta^{\frac{\gamma+1}{\gamma-1}}\,,
\end{equation} 
where $K_\tau$, $K_p$ and $K_c$ are positive
constants given by
\beq
  K_\tau:=\Big(\frac{2\sqrt{K_1\gamma}}{\gamma-1}\Big)^\frac{2}{\gamma-1}\,,
\quad
  K_p:=K_1\,K_\tau^{-\gamma},\com{and}
  K_c:=\TS\sqrt{K_1\gamma}\,K_\tau^{-\frac{\gamma+1}{2}}\,.
\label{Kdefs}
\eeq
Clearly, one has
\beq
   K_p=\TS\frac{\gamma-1}{2\gamma}K_c \com{and}
   K_\tau K_c=\frac{\gamma-1}{2}\,.
\label{KpKcRela}
\eeq
In this paper, we always use $K$ with some subscripts to denote
positive constants. We will not alert the readers again if there is no ambiguity.

We observe from \eqref{3.1} that the $L^\infty$ norm of $(r, s)$ are bounded by the initial data, which leads to a uniform $L^\infty$ bounds on $u$ and $\eta(\tau)$ with the help of \eqref{3.0}. From \eqref{3.00}, one 
finds the uniform positive lower bound on the specific volume $\tau$, or equivalently, the uniform upper bound on the density $\rho=\frac{1}{\tau}$. However, we remark that, such estimates do not offer any control 
on the lower bound of density $\rho$ (or, upper bound of $\tau$).

Following the procedure of last subsection in deriving \eqref{22_8}, c.f. \cite{G3}, the good gradient variables are 
\[
  y :=  \eta^{\frac{\gamma+1}{2(\gamma-1)}}\,s_x 
\com{and}
  q :=        \eta^{\frac{\gamma+1}{2(\gamma-1)}}\,r_x\,,
\]
which satisfy the following Riccati type equations:
\begin{align}
  \partial_+ y &= - a_2 \, y^2\,, \label{p_y_eq}\\
  \partial_- q &= - a_2 \, q^2\,, \label{p_q_eq}
  \end{align}
where
\begin{align}\label{a2}
  {a}_2 &:= K_c\,{\TS\frac{\gamma+1}{2(\gamma-1)}}\,
        \eta^{\frac{3-\gamma}{2(\gamma-1)}}\,.
        \end{align}

We note the behavior of ${a}_2$ is purely determined by $\eta$. Since $\eta$ has a uniform upper bound, 
when $\gamma\ge 3$, there exists a uniform constant ${\bar a}_2>0$, such that $a_2\ge {\bar a}_2$. 
In this case \eqref{22_9} is justified, and Lax's theory applies without smallness condition. 

{\begin{proposition}\label{prop_p_1}(A corollary from \cite{lax2})
Assume that $(\tau_0(x), u_0(x))$ satisfy the conditions in Assumption \ref{p-assumption}. When $\gamma\geq3$, classical solution of (\ref{cp}) 
breaks down if there is a point $x_*\in\mathbb{R}$ such that 
\beq\label{laxcompress}
 s_x(x_*,0)<0 \com{or}  r_x(x_*,0)< 0\,.
\eeq
\end{proposition}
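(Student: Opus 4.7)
The plan is to feed the initial-data assumption directly into the Riccati equations \eqref{p_y_eq} and \eqref{p_q_eq}, integrate along the appropriate characteristic through $x_*$, and close everything with the uniform positive lower bound on $a_2$ that the regime $\gamma\ge 3$ makes available. I treat the case $s_x(x_*,0)<0$; the case $r_x(x_*,0)<0$ is identical after swapping $(s,y,\partial_+)$ with $(r,q,\partial_-)$.

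First I would collect the a priori bounds. Because $r$ and $s$ are transported by \eqref{3.1}, Assumption \ref{p-assumption} gives $\|r\|_{L^\infty},\|s\|_{L^\infty}\le C_0$ for the lifetime of any $C^1$ solution, where $C_0$ depends only on the initial data. Since $\eta=(s-r)/2$, this yields a uniform upper bound $0\le\eta\le\bar\eta$. Looking at \eqref{a2}, the exponent $(3-\gamma)/(2(\gamma-1))$ is $\le 0$ exactly when $\gamma\ge3$, so in this range $\eta^{(3-\gamma)/(2(\gamma-1))}\ge\bar\eta^{(3-\gamma)/(2(\gamma-1))}>0$, and hence
\[
a_2(x,t)\ge \bar a_2 := K_c\,\frac{\gamma+1}{2(\gamma-1)}\,\bar\eta^{\frac{3-\gamma}{2(\gamma-1)}}>0
\]
uniformly in $(x,t)$ along the classical solution.

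Next I would run the characteristic ODE. Let $x^+(t)$ be the forward characteristic with $x^+(0)=x_*$ and set $Y(t):=y(x^+(t),t)$. The hypothesis $s_x(x_*,0)<0$ together with $\eta(x_*,0)>0$ gives $Y(0)<0$. Equation \eqref{p_y_eq} becomes $Y'(t)=-a_2(x^+(t),t)\,Y^2$, so after one integration
\[
\frac{1}{Y(t)}=\frac{1}{Y(0)}+\int_0^t a_2(x^+(\sigma),\sigma)\,d\sigma\ge \frac{1}{Y(0)}+\bar a_2\,t.
\]
Since $1/Y(0)<0$, the right-hand side reaches $0$ at some finite $t_*\le -1/(\bar a_2\,Y(0))$, forcing $Y(t)\to-\infty$ as $t\to t_*^-$.

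Finally I would translate the blow-up of $Y$ back into a blow-up of $s_x$ to conclude that the $C^1$ solution cannot persist past $t_*$. From the definition $y=\eta^{(\gamma+1)/(2(\gamma-1))}s_x$ and the upper bound $\eta\le\bar\eta$ one has $|s_x|\ge |y|/\bar\eta^{(\gamma+1)/(2(\gamma-1))}$ along $x^+$, so $|s_x(x^+(t),t)|\to\infty$ as $t\to t_*^-$, which is precisely breakdown of the classical solution. The argument for $r_x(x_*,0)<0$ is completely analogous via \eqref{p_q_eq} along the backward characteristic. The only step that could have been an obstacle — ensuring the decay rate $a_2$ does not degenerate — is trivialized by $\gamma\ge 3$; indeed the whole point of the subsequent Subsection \ref{subsection2.3} is that when $1<\gamma<3$ the exponent in \eqref{a2} flips sign and one needs a genuine upper bound on $\tau$ (equivalently a lower bound on $\rho$) to close the argument.
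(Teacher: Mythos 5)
Your proof is correct and follows essentially the same route as the paper: the uniform upper bound on $\eta$ from the Riemann invariants gives $a_2\ge\bar a_2>0$ precisely because the exponent $\frac{3-\gamma}{2(\gamma-1)}$ is nonpositive for $\gamma\ge 3$, and integrating the Riccati equation \eqref{p_y_eq} along the forward characteristic through $x_*$ forces $1/y$ to reach zero in finite time. You merely spell out two steps the paper leaves implicit (the derivation of $\bar a_2$ and the passage from blowup of $y$ to blowup of $s_x$), both of which are handled correctly.
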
}
\begin{proof}
We will show that if  $s_x(x,0)<0$ or $r_x(x,0)< 0$ for some $x$, then singularity forms in finite time.  Without loss of generality, we assume that $s_x(x^*,0)<0$, then $y(x^*,0)<0$ for some $x^*$.  Denote the forward characteristic passing $(x^*,0)$ as $x^+(t)$. By  \eqref{p_y_eq},
\beq\label{p_prop_2_proof}
\frac{1}{y(x^+(t), t)}=\frac{1}{y(x^*, 0)}+\int_0^t \, {a_2}(x^+(\sigma), \sigma)\;d\sigma\,,
\eeq
where $a_2\ge {\bar a}_2$ for some uniform constant ${\bar a}_2>0$. Therefore, the 
right hand side of \eqref{p_prop_2_proof} approaches zero in finite time, which means singularity happens in finite time.
\end{proof}

However,  for most physical gases where $1<\gamma<3$, the positive lower bound of the function ${a}_2$
requires positive lower bound of density. We remark that, for generic smooth initial data without initial vacuum, even for the global smooth solutions of \eqref{cp}, the density does not have constant positive lower bound in general.  An example of Lipschitz continuous solutions can be found in \cite{Riemann} and 
some detailed discussion in Section 82 of \cite{courant}, where the density decays in time at a rate of $\frac{1}{1+t}$.  In view of (2.10), the lower bound of $a_2$ cannot decay too fast for possible singularity formation in finite time. Therefore, a new idea is 
needed to obtain a sufficiently sharp control on the time-dependent positive lower bound of density. This will be one of our main contributions in this paper, which will be addressed in the next subsection.  

\subsection{Singularity formation in p-system: $\gamma>1$\label{subsection2.3}}
In this section, for all $\gamma>1$, we prove the singularity formation for the Cauchy problem in p-system when initial data contain some compression, and otherwise the global existence of smooth solutions. This is achieved by establishing a sharp enough time-dependent positive lower bound on density.  The following theorem is the first main result of this paper. 

\begin{theorem}\label{p_sing_thm} For $\gamma>1$, if $(\tau_0(x), u_0(x))$ satisfy conditions in Assumption \ref{p-assumption}, then the Cauchy problem (\ref{cp}) has a unique 
global-in-time classical solution if and only if 
\beq\label{p_lemma_con}
 s_x(x,0)\geq0 \com{and}  r_x(x,0)\geq 0,\com{for all} x\in\mathbb{R}\,.
\eeq
\end{theorem}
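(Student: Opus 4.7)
The two directions both hinge on the Riccati equations \eqref{p_y_eq}--\eqref{p_q_eq}, and the only genuinely new ingredient — the centerpiece of the paper's contribution, as advertised in the introduction — is a time-dependent upper bound on $\tau$ in the physical range $1<\gamma<3$. Assuming such a bound, both halves of the equivalence follow from the material already set up in \S\ref{subsection2.1}--\S\ref{subsection2.2}.

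I would begin with the global existence direction. Since $y=\eta^{(\gamma+1)/(2(\gamma-1))}\,s_x$ has the same sign as $s_x$, and since $y\equiv 0$ is a barrier for the Riccati equation $\partial_+ y=-a_2 y^2$, the hypothesis $s_x(\cdot,0)\ge 0$ yields $y\ge 0$ along every forward characteristic; symmetrically $q\ge 0$ along backward ones. Then $\partial_+(1/y)=a_2>0$ gives $0\le y\le \|y(\cdot,0)\|_\infty$ and likewise $0\le q\le \|q(\cdot,0)\|_\infty$. The Riemann invariants $r=u-\eta$ and $s=u+\eta$ are transported along characteristics, so their $L^\infty$ norms are controlled by the initial data, giving a uniform upper bound on $\eta$ and hence a uniform positive lower bound on $\tau$. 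The only remaining obstruction to smooth continuation is thus a quantitative upper bound on $\tau$; assuming the key lemma below, $s_x=y\,\eta^{-(\gamma+1)/(2(\gamma-1))}$ and $r_x=q\,\eta^{-(\gamma+1)/(2(\gamma-1))}$ remain bounded on compact time intervals, and the standard continuation principle for $C^1$ solutions of $2\times 2$ hyperbolic systems yields a global classical solution.

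For the blow-up direction, suppose $s_x(x^*,0)<0$ (the case $r_x(x^*,0)<0$ is symmetric), so $y(x^*,0)<0$. Integrating $\partial_+(1/y)=a_2$ along the forward characteristic $x^+(t)$ from $(x^*,0)$ gives
\[
\frac{1}{y(x^+(t),t)}=\frac{1}{y(x^*,0)}+\int_0^t a_2(x^+(\sigma),\sigma)\,d\sigma,
\]
which reaches $0$ — forcing $y\to-\infty$ — in finite time provided $\int_0^\infty a_2\,d\sigma=+\infty$ along the characteristic. For $\gamma\ge 3$ this divergence is immediate from the $L^\infty$ upper bound on $\eta$, recovering Proposition \ref{prop_p_1}. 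For $1<\gamma<3$ the exponent $(3-\gamma)/(2(\gamma-1))$ is positive, so $a_2\sim\tau^{-(3-\gamma)/4}$ when $\tau$ is large; a pointwise bound $\tau(x,t)\le C(1+t)^{\beta}$ with $\beta\le 4/(3-\gamma)$ yields $a_2\ge C(1+t)^{-1}$ and thus (at worst) logarithmic divergence of the integral, which is enough to force blow-up of $y$ in finite time.

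The main obstacle is therefore precisely this pointwise upper bound on $\tau$ for $1<\gamma<3$. The difficulty is that $\tau_t=u_x=(y+q)/\big(2\eta^{(\gamma+1)/(2(\gamma-1))}\big)$, so the denominator degenerates as $\tau$ grows, making a naive Gr\"onwall on $\tau$ alone self-defeating; moreover, in the compressive regime one of $y,q$ is itself becoming unbounded, so one cannot simply feed in an a priori uniform bound on the gradient variables. The new idea must exploit the feedback between the two: along any characteristic on which $\tau$ is large, $a_2$ is also large, and the Riccati dissipation drives $|y|$ and $|q|$ downward before they can push $\tau$ any further. Quantifying this coupling by tracking $y$ and $q$ along their respective characteristic families, up to the first putative singular time, is what I expect to yield the polynomial growth bound $\tau\le C(1+t)^{4/(3-\gamma)}$, with the critical exponent precisely matching the threshold for integral divergence. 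Once this bound is in hand both halves of the theorem close simultaneously.
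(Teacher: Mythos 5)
Your outline of both directions matches the paper's architecture, but the one step you leave unproven is exactly the paper's new contribution, and your heuristic for how it should be proved points in the wrong direction. The upper bound $\tau(x,t)\le\big[\tau_0^{\frac{3-\gamma}{4}}(x)+K_0(Y+Q)t\big]^{\frac{4}{3-\gamma}}$ (Lemma \ref{density_low_bound_1-3}) does \emph{not} require any feedback mechanism in which ``Riccati dissipation drives $|y|$ and $|q|$ downward before they can push $\tau$ further,'' and your two reasons for dismissing the direct argument are both incorrect. First, although $y$ or $q$ does become unbounded in the compressive regime, it only goes to $-\infty$: the Riccati equations \eqref{p_y_eq}--\eqref{p_q_eq} with $a_2>0$ give the \emph{one-sided} bounds $y\le Y:=\max\{0,\sup_x y(x,0)\}$ and $q\le Q$ for arbitrary (in particular compressive) data, by trivial ODE comparison — this is Lemma \ref{lemma_p_2}. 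Second, since $\tau$ can only increase where $u_x>0$, these upper bounds are all you need: writing $u_x=\tfrac12(y+q)\sqrt{K_c/c}$ and using \eqref{c def}, the mass equation gives
\begin{equation*}
\tau^{-\frac{\gamma+1}{4}}\,\tau_t\;\le\;\tfrac12 (K_1\gamma)^{-\frac14}\sqrt{K_c}\,(Y+Q),
\end{equation*}
and because $\frac{\gamma+1}{4}<1$ precisely when $1<\gamma<3$, this ``naive'' differential inequality is sublinear and integrates in time directly to the stated polynomial bound — it is not self-defeating; the degeneracy you worry about is exactly what the restriction $\gamma<3$ tames. So the missing lemma has a two-line proof, but your proposal neither supplies it nor describes the mechanism that makes it work.

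A secondary gap: in the sufficiency direction you also need an upper bound on $\tau$ (equivalently a positive lower bound on density) to keep $s_x=y\,\eta^{-\frac{\gamma+1}{2(\gamma-1)}}$ and $r_x$ bounded on finite time intervals, and you invoke only ``the key lemma below,'' which (even once proved) covers $1<\gamma<3$; for $\gamma\ge 3$ the same differential inequality gives at best exponential growth ($\gamma=3$) and no global bound at all ($\gamma>3$). The paper closes this for all $\gamma>1$ by quoting Lin's result \cite{lin2}, which yields $\tau\le \bar K_0(1+t)$ for rarefactive data (applied to the $C^1$ solution via weak--strong uniqueness), and only then runs the continuation argument. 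As written, your global-existence half is incomplete for $\gamma\ge3$ and conditional for $1<\gamma<3$.
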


\begin{remark}\label{rem2.4}
At  a point $(x,t)$,  the solution of \eqref{cp} is said to be forward rarefactive (resp. compressive) if   $s_x(x,t)\geq0$ (resp. $s_x(x,t)<0$); the solution is said to be backward rarefactive (resp. compressive) if   $r_x(x,t)\geq0$ (resp. $r_x(x,t)<0$).

Hence this theorem can be understood as that classical global-in-time solution of p-system exists if and only if the initial data are nowhere compressive. 

If \eqref{p_lemma_con} is not satisfied at any point, that is, if the initial data contain some compression, then gradient blowup happens in finite time.
\end{remark}

In order to prove Theorem \ref{p_sing_thm}, the following observation plays an important role. From \eqref{p_y_eq} and \eqref{p_q_eq}, using comparison principle for ODEs, with the help of the following two non-negative constants $Y$ and $Q$ defined by 
\begin{equation}
Y=\max\Big\{0,\ \sup_x\{y(x,0)\}\Big\}, \quad  Q= \max\Big\{0,\ \sup_x\{q(x,0)\}\Big\},
\end{equation}
it is easy to see the following lemma holds.  

\begin{lemma}\label{lemma_p_2} If $(\tau_0(x), u_0(x))$ satisfy Assumption \ref{p-assumption}, it holds for $C^1$ solution $(\tau, u)(x,t)$  of (\ref{cp}) that
\[y(x,t)\leq Y, \quad \text{and} \quad  q(x,t)\leq Q\,.\]
\end{lemma}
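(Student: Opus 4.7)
The plan is to read the lemma as a one-line consequence of the Riccati structure together with the positivity of the coefficient $a_2$. Fix any point $(x,t)$ in the domain of the $C^1$ solution and let $x^+(\cdot)$ denote the forward characteristic through $(x,t)$, parameterised by time and traced back to its foot $x_0 = x^+(0)$ on the initial line. Along this curve, the directional derivative $\partial_+$ becomes an ordinary time derivative, so \eqref{p_y_eq} reduces to the scalar ODE $\frac{d}{dt} y(x^+(t),t) = -a_2\, y(x^+(t),t)^2$.

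From the definition \eqref{a2}, together with $K_c>0$ and the fact that $\eta>0$ (since the Riemann invariants are bounded, $\eta$ stays bounded above by its initial bound and below by zero in the $C^1$ regime), we have $a_2 > 0$ along the characteristic. Hence the right-hand side of the ODE is non-positive, so $t \mapsto y(x^+(t),t)$ is monotone non-increasing. In particular
\[
y(x,t) \;=\; y(x^+(t),t) \;\le\; y(x_0,0) \;\le\; \sup_{\xi\in\RR} y(\xi,0) \;\le\; Y,
\]
where the last inequality is just the definition of $Y = \max\{0,\sup_\xi y(\xi,0)\}$ (this is needed only in the degenerate case $\sup_\xi y(\xi,0) \le 0$, for which the first two inequalities together with $Y=0$ still give $y(x,t)\le 0 = Y$, because a non-increasing function that starts non-positive remains non-positive). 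Since $(x,t)$ was arbitrary, $y \le Y$ globally.

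The bound $q \le Q$ is entirely symmetric: trace back the backward characteristic through $(x,t)$, reduce \eqref{p_q_eq} to $\frac{d}{dt} q = -a_2 q^2 \le 0$, and conclude monotonicity along backward characteristics, giving $q(x,t) \le q(x_0,0) \le Q$. There is essentially no obstacle here: the only thing to verify is that the characteristic through $(x,t)$ can be extended back to the initial line without leaving the strip of existence, which is automatic for a $C^1$ solution on $\RR \times [0,T]$ since $\pm c$ is bounded (as $\eta$, hence $\tau$, has a positive lower and upper bound on compact time intervals while the solution remains $C^1$). Thus the lemma is a direct corollary of the sign of the Riccati nonlinearity.
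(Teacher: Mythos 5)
Your argument is correct and is essentially the paper's own: the paper simply invokes the ODE comparison principle for the Riccati equations \eqref{p_y_eq}--\eqref{p_q_eq} with $a_2>0$, which is exactly the monotonicity along characteristics you spell out (the right-hand side $-a_2 y^2\le 0$ forces $y$ non-increasing along forward characteristics, and symmetrically for $q$). Your extra remarks on tracing characteristics back to $t=0$ and on the degenerate case $\sup_x y(x,0)\le 0$ are fine and just make explicit what the paper leaves implicit.
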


With the help of this Lemma \ref{lemma_p_2}, we are able to prove the following key estimate on the lower bound of density (equivalently, upper bound of $\tau$), for $1<\gamma<3$, covering most physical cases for 
polytropic gases. 

\begin{lemma}\label{density_low_bound_1-3}  Let $(\tau, u)(x,t)$ be a $C^1$ solution of (\ref{cp}) defined on time interval $[0, T]$ for some $T>0$, with initial data $(\tau_0(x), u_0(x))$ satisfying conditions in Assumption \ref{p-assumption}. If $1<\gamma<3$,  then for any $x\in \mathbb{R} $ and $t\in [0, T)$,  there is a positive constant $K_0$ depending only on $\gamma$, such that 
\[
\tau(x, t)\le \big[{\tau_0}^{\frac{3-\gamma}{4}}(x)+K_0(Y+Q) t\big]^{\frac{4}{3-\gamma}}.
\]
\end{lemma}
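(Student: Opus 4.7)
\medskip

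The plan is to convert the bounds on $y$ and $q$ provided by Lemma \ref{lemma_p_2} into an ODE-type inequality for $\tau$ along vertical (fixed $x$) lines in Lagrangian coordinates, and then integrate in $t$.

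First, I would recover $u_x$ from the gradient variables. Since $r=u-\eta$ and $s=u+\eta$, we have $u_x=\tfrac12(s_x+r_x)$, which together with the definitions
\[
y=\eta^{\frac{\gamma+1}{2(\gamma-1)}}s_x,\qquad q=\eta^{\frac{\gamma+1}{2(\gamma-1)}}r_x,
\]
gives
\[
u_x=\tfrac12\,\eta^{-\frac{\gamma+1}{2(\gamma-1)}}(y+q).
\]
Applying Lemma \ref{lemma_p_2} yields the one-sided bound $u_x\le \tfrac12\eta^{-\frac{\gamma+1}{2(\gamma-1)}}(Y+Q)$. The mass equation \eqref{p1} then reads $\tau_t=u_x$, so
\[
\tau_t\le\tfrac12\,\eta^{-\frac{\gamma+1}{2(\gamma-1)}}(Y+Q).
\]

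Next, I would rewrite the power of $\eta$ as a power of $\tau$ using the explicit relation $\eta=\frac{2\sqrt{K_1\gamma}}{\gamma-1}\tau^{-\frac{\gamma-1}{2}}$ from \eqref{3.00}. A short computation gives
\[
\eta^{-\frac{\gamma+1}{2(\gamma-1)}}=K\,\tau^{\frac{\gamma+1}{4}}
\]
for some constant $K>0$ depending only on $\gamma$ (and the harmless constant $K_1$). Thus, at each fixed $x$, the function $t\mapsto\tau(x,t)$ satisfies the differential inequality
\[
\tau_t\le \tilde K(Y+Q)\,\tau^{\frac{\gamma+1}{4}}.
\]

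Finally, since $1<\gamma<3$, we have $\frac{\gamma+1}{4}<1$, i.e.\ $\frac{3-\gamma}{4}>0$, and the inequality separates nicely. Multiplying by $\tau^{-\frac{\gamma+1}{4}}$ and integrating from $0$ to $t$ at fixed $x$ yields
\[
\tfrac{4}{3-\gamma}\bigl(\tau^{\frac{3-\gamma}{4}}-\tau_0^{\frac{3-\gamma}{4}}\bigr)\le \tilde K(Y+Q)\,t,
\]
which rearranges precisely to the claimed bound with $K_0=\frac{3-\gamma}{4}\tilde K$.

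The only subtle point (and hence what I would treat most carefully) is the monotonicity of the power function $x\mapsto x^{\frac{4}{3-\gamma}}$ and the fact that separation of variables via $\tau^{-\frac{\gamma+1}{4}}\tau_t\le \tilde K(Y+Q)$ is legitimate: because $\tau>0$ throughout by the $C^1$ assumption on $[0,T]$ and the lower bound of $\tau_0$, the factor $\tau^{-\frac{\gamma+1}{4}}$ is well defined and the integration is elementary. Apart from this, the proof is a straightforward reduction from Lemma \ref{lemma_p_2} and the relations in \eqref{3.00} to a scalar Bernoulli-type inequality for $\tau$ along vertical lines.
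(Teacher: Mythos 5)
Your proposal is correct and follows essentially the same route as the paper: both recover $u_x=\frac12(s_x+r_x)$ from the gradient variables, use the one-sided bounds $y\le Y$, $q\le Q$ together with the mass equation $\tau_t=u_x$, rewrite the weight (your $\eta^{-\frac{\gamma+1}{2(\gamma-1)}}$ is exactly the paper's $\sqrt{K_c/c}$ up to a constant) as a power of $\tau$, and integrate the resulting Bernoulli-type inequality in time using $\frac{\gamma+1}{4}<1$. The only cosmetic difference is that the paper works with $\sqrt{c}$ while you work directly with powers of $\eta$, which is the same computation.
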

\begin{proof} From the definition of $y$ and $q$, it is clear that 
   $$y=s_x\sqrt{\frac{c}{K_c}},\ \quad q=r_x \sqrt{\frac{c}{K_c}} ,$$
which implies that 
$$(y+q)=\sqrt{\frac{c}{K_c}} (r_x+s_x)=2u_x\sqrt{\frac{c}{K_c}}.$$
Therefore, we read from the mass equation that 
$$\sqrt{c}\,\tau_t=\frac12 \sqrt{K_c}(y+q).$$
Using the formula of sound speed \eqref{c def}, Lemma \ref{lemma_p_2}, one finds 
\begin{equation}
\tau^{-\frac{\gamma+1}{4}}\tau_t\le \frac12(K_1\gamma)^{-\frac14}\sqrt{K_c}(Y+Q).
\end{equation}

When $1<\gamma<3$, $\frac{\gamma+1}{4}<1$, then for any $x\in \mathbb{R}$, and $t\in [0, T)$,  a simple time integration shows that 

$$\tau(x, t)\le \big[{\tau_0}^{\frac{3-\gamma}{4}}(x)+K_0(Y+Q) t\big]^{\frac{4}{3-\gamma}},$$
where $K_0=\frac{3-\gamma}{8}(K_1\gamma)^{-\frac14}\sqrt{K_c}$. 
This completes the proof of this lemma.
\end{proof}

\begin{remark} We remark that, 
for purely rarefactive initial data, i.e. the initial data satisfying the conditions of Assumption 2.1 and \eqref{p_lemma_con}, Lin \cite{lin2} proved that the density of  any Lipschitz solution of \eqref{cp} has a 
positive lower bound of order $\frac{1}{1+t}$ through a relatively complicated approximation 
generated by a polygonal scheme. This Lemma \ref{density_low_bound_1-3} works for general data as long 
as $\gamma\in (1, 3)$. Although the time-dependent bound is not as sharp as that in \cite{lin2}, the proof is 
much simpler and elementary. A generalization of \cite{lin2} with $\frac{O(1)}{1+t}$ bound on density to general initial data for all $\gamma>1$
has been carried out in our work \cite{CPZ}. This $\frac{O(1)}{1+t}$ rate is optimal for generic $C^1$ initial data due to the example in \cite{courant, Riemann}. 
\end{remark}

We now give a proof for Theorem \ref{p_sing_thm}.
\begin{proof}[\bf Proof of Theorem \ref{p_sing_thm}]

\noindent\underline{\bf 1) Sufficiency}.  In this part, we prove that under Assumption 2.1, if the initial data satisfy the condition \eqref{p_lemma_con}, then problem \eqref{cp} admits a unique global $C^1$ solution. As a matter of fact, this is a direct consequence of the result presented in \cite{lin2}.  We give an outline here. 

Recall that  the local-in-time existence of $C^1$ solutions for \eqref{cp} can be proved by classical method, c.f. Theorem 4.1 on page 66 of \cite{Li-Yu}, see also \cite{LiBook, Dafermos}, where the life-span of classical solution depends on the $C^1$-norm of the initial data and the positive lower bound of $\tau_0$. The main idea is to use the standard continuity argument to extend the local classical solutions to global with 
a priori estimates in $L^\infty$ and Lipschitz norms of $(\tau, u)(x,t)$.  Indeed, the uniform $L^\infty$ bounds of 
$u(x,t)$ and the lower bound of $\tau(x, t)$ follow from those of $(r, s)(x,t)$ which are constant along their characteristics, respectively; see \eqref{3.1}. For Lipschitz norms, we know from \eqref{p_lemma_con}, that $y(x,0)\ge 0$ and $q(x, 0)\ge 0$, and thus $\|y(x,t)\|_{L^\infty}\le Y$, and $\|q(x,t)\|_{L^\infty}\le Q$. Now, the result of \cite{lin2} offers that, for initial data satisfying Assumption 2.1 and \eqref{p_lemma_con}, for any Lipschitz continuous solutions of \eqref{cp}, there is a positive constant ${\bar K}_0$, independent of time, such that 
 $$\tau\le {\bar K}_0 (1+t),$$ 
which gives the upper bound of $\tau(x, t)$. Furthermore, we deduce from the definitions of $y$, $q$, and $\eta$ that, there exists a function
${\tilde C}(t)$ satisfying $1\le {\tilde C}(t)<\infty$ for any positive finite time $t$ that 
$$\|(r_x, s_x, \tau_x, u_x)(x, t)\|_{L^\infty}\le {\tilde C}(t).$$
Now, if the maximal existence time $T_*$ of $C^1$ solution is finite, then ${\tilde C}({T_*})$ is finite, so are $L^\infty$ norm of $(\tau, u)(x, T_*)$. One can then apply these estimates and the local existence result to 
show there exists a positive time ${\bar t}$ such that the $C^1$ solution can be further extended to the time 
interval $[0, T_*+{\bar t})$ contradicting the fact that $T_*$ is maximal. Therefore, $T_*=\infty$.

\noindent\underline{\bf 2) Necessity}. In this part, we shall prove that under Assumption 2.1, if the initial data fail to satisfy the condition \eqref{p_lemma_con} at one point ${x^*}\in \mathbb{R}$, the $C^1$ solution of \eqref{cp} must blow up in its derivatives in finite time. Without loss of generality, we assume that $s_x(x^*,0)<0$, then $y(x^*, 0)<0$. When $\gamma\ge 3$, this was shown in the last section. Here, 
we only have to consider the case $1<\ga<3$, in which $ {a}_2$ vanishes when density goes to zero.  We denote the forward characteristic passing $(x^*,0)$ as $x^+(t)$. In view of \eqref{p_prop_2_proof}, 
$$
\frac{1}{y(x^+(t), t)}=\frac{1}{y(x^*, 0)}+\int_0^t \, {a_2}(x^+(\sigma), \sigma)\;d\sigma\, .
$$
 To show $y$ blows up in finite time, it is enough to show that 
\[
\int_0^\infty \, {a_2(x^+(t), t)}\;dt=\infty\,,
\]
where the integral is along characteristic $x^+(t)$. We read from Lemma \ref{density_low_bound_1-3}, and 
the definition of $a_2$ that
\[
a_2(x^+(t), t)\geq \frac{\ga +1}{4} K_{\tau}^{-\frac{\ga +1}{4}}\big[M_1^{\frac{3-\gamma}{4}}+K_0(Y+Q)t\big]^{-1}.
\]
Hence,
\[\int_0^\infty \, {a_2(x^+(t), t)}\;dt=\infty\,.\]
Therefore, $y$ and $s_x$ blow up in finite time. The proof of the theorem is completed.
\end{proof}

\begin{remark} The method developed here can be applied to p-system with general pressure laws. Under some mild structural conditions, a similar result to Theorem 2.3 has been established in the section 2.4 of our preprint arXiv:1408.6775v2. 
\end{remark}

\section{Full compressible Euler equations}
In this section, we consider the following Cauchy problem of full compressible Euler equations
\begin{equation}\label{fulleuler0}
\begin{cases}
\tau_t-u_x=0\,,\\
u_t+p_x=0\,,\   p=Ke^{\frac{S}{c_v}}\tau^{-\gamma},\ \gamma>1,\\
S_t=0\,,\\
(\tau, u, S)(x, 0)=(\tau_0, u_0, S_0)(x).
\end{cases}
\end{equation}
Here, we replaced the energy equation with entropy equation. For smooth solutions, we see that $S(x,t)=S_0(x):=S(x)$. Throughout this section, we require that the initial data $(\tau_0, u_0, S_0)(x)$ satisfy conditions in 
the following assumption. 
\begin{assumption}\label{assu} Assume that $(\tau_0(x), u_0(x))\in C^1(\mathbb{R})$, $S_0(x)\in C^2(\mathbb{R})$,  and there are uniform positive constants $M_1$ and $M_2$ such that 
$$\|(\tau_0, u_0)(x)\|_{C^1}+\|S_0(x)\|_{C^2}\le M_1,\ \tau_0\ge M_2.$$
\end{assumption}
For smooth solutions, it is often convenient to choose some new variables. Define 
\beq
   m:=e^{\frac{S}{2c_v}}>0\label{m def}, \quad c:=\sqrt{-p_\tau}=
  \sqrt{K\,\gamma}\,{\tau}^{-\frac{\gamma+1}{2}}\,e^{\frac{S}{2c_v}}\, ,
\eeq
and 
\beq
   \eta := \int^\infty_\tau{\frac{c}{m}\,d\tau}
         = \TS\frac{2\sqrt{K\gamma}}{\gamma-1}\,
\tau^{-\frac{\gamma-1}{2}}>0\,,\label{z def}
\eeq
where $c$ is the nonlinear Lagrangian sound speed. Direct calculations show that (c.f. \cite{G3, G6})
\begin{align}
  \tau&=K_{\tau}\,\eta^{-\frac{2}{\gamma-1}}\,,\nn\\
  p&=K_p\, m^2\, \eta^{\frac{2\gamma}{\gamma-1}}\,,\label{tau p c}\\
  c&=c(\eta,m)=K_c\, m\, \eta^{\frac{\gamma+1}{\gamma-1}}\,.\nn
\end{align}
We remark that, we still use $\eta$, $c$, and many other functions appeared in Section 2 for full Euler equations. These functions are natural extensions from isentropic flows to adiabatic ones, in the sense that they are different to each other only by a positive constant multiple when $S$ is chosen as a constant. 

For $C^1$ solutions, the problem \eqref{fulleuler0} is equivalent to (c.f. \cite{Dafermos,smoller})
\begin{equation}\label{fulleuler1}
\begin{cases}
  \eta_t+\frac{c}{m}\,u_x=0\,, \\
  u_t+m\,c\,\eta_x+2\frac{p}{m}\,m_x=0\,,\\
  m_t=0\,,\\
(\eta, u, m)(x, 0)=(\eta_0, u_0, m_0)(x)=(\eta(\tau_0(x)), u_0(x), m(S_0(x))).
\end{cases}
\end{equation}

Due to the linear degeneracy, in the regime of smooth solutions, $m$ is independent of time, we thus fix  
$m=m(x)=m_0(x)$ in the rest of this paper. Therefore, formally, one can still treat \eqref{fulleuler1} as 
a system of two (significant) equations, with fluxes (pressure) depending on $x$ explicitly. Like in the case of isentropic flows, two truly nonlinear characteristic fields are 
\beq\label{pmc_full}
  \frac{dx^+}{dt}=c \com{and} \frac{dx^-}{dt}=-c\,,
\eeq
and we denote the corresponding directional derivatives along these by
\[
  \pp := \dbyd t+c\,\dbyd x \com{and}
  \pn := \dbyd t-c\,\dbyd x\,,
\]
respectively. Comparing with p-system, one of significant differences for full Euler system is the disappearance of 
Riemann invariances, in fact, the Riemann variables are 
\beq
  r:=u-m\,\eta\,,\qquad s:=u+m\,\eta\,.
\label{r_s_def}
\eeq
which vary along characteristics
\begin{align}
  \pp s&=\frac{1}{2\gamma}\,\frac{c\, m_x}{m}\,(s-r)\,,\label{s_eqn}\\
  \pn r&=\frac{1}{2\gamma}\,\frac{c\, m_x}{m}\,(s-r)\,.\label{r_eqn}
\end{align}
Therefore, without smallness assumption of the solutions, the first non-trivial question one encounters is 
how to achieve $L^\infty$ estimates on the solutions. We remark that this question is trivial for isentropic 
case since $r$ and $s$ are invariant along their characteristics. Fortunately, this question is answered recently by G. Chen, R. Young and Q. Zhang in \cite{G6} under the following additional condition: 
\begin{assumption}\label{BV} Assume that the initial entropy $S_0(x)$
has finite total variation, so that
\beq
  V := \frac{1}{2c_v}\int_{-\infty}^{+\infty}|S'(x)|\;dx
     = \int_{-\infty}^{+\infty}\frac{|m'(x)|}{m(x)}\;dx<\infty\,,
\label{Vdef}
\eeq
\end{assumption}

From Assumption 3.1, it is clear that there are positive constants $M_L$, $M_U$, $M_s$ and $M_r$ such that 
\beq
  0 < M_L < m(x) < M_U\,, \quad |s_0(x)|<M_s,\ \quad 
  |r_0(x)|<M_r\,.
\label{m_bounds}
\eeq
For ${\ol V}=\frac{V}{2\gamma}$,  we now define
\begin{align*}
  N_1 &:= M_s+\ol V\,M_r+\ol V\,(\ol V\,M_s+{\ol V}^2\,M_r)
	\,e^{{\ol V}^2},\\
  N_2 &:= M_r+\ol V\,M_s+\ol V\,(\ol V\,M_r+{\ol V}^2\,M_s)
	\,e^{{\ol V}^2}.
\end{align*}

The following proposition is proved in \cite{G6} by a highly non-trivial characteristic method.

\begin{proposition}{\em \cite{G6}}
\label{Thm_upper} Assume the initial data $(\tau_0, u_0, S_0)(x)$ satisfy the conditions in Assumptions 3.1 and 3.2. If $(\tau(x, t), u(x,t), S(x))$ is a  $C^1$ solution of \eqref{fulleuler0} for $t\in[0,T)$ for some positive $T$, then it holds that 
\beq |s(x,t)|\le N_1{M_U}^{\frac{1}{2\gamma}}, \quad |r(x,t)|\le N_2{M_U}^{\frac{1}{2\gamma}},
\eeq
\beq\label{u_rho_bounds}
   |u(x,t)|\leq\frac{N_1+N_2}{2}{M_U}^{\frac{1}{2\gamma}},\ 
   \eta(x,t)\leq\frac{N_1+N_2}{2}{M_L}^{\frac{1}{2\gamma}-1}:=E_U.
\eeq
Therefore, there are positive constants $M_{\rho}$ such that 
\beq   \rho\le M_{\rho},\quad \tau\ge \frac{1}{M_{\rho}}.\eeq
\end{proposition}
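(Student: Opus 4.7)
The plan is to derive uniform pointwise bounds on the Riemann variables $s,r$ by a careful characteristic iteration that exploits the BV bound on entropy, and then translate these into bounds for $u$, $\eta$, and $\rho$. The first observation is that because $c>0$, every forward characteristic $x^+(\cdot)$ is strictly increasing in $x$ and every backward characteristic strictly decreasing, so that along any forward characteristic the time-uniform bound
$$\int_0^t \frac{c\,|m_x|}{2\gamma\,m}(x^+(\tau),\tau)\,d\tau=\frac{1}{2\gamma}\int_{x^+(0)}^{x^+(t)}\frac{|m'(y)|}{m(y)}\,dy\le \bar V$$
holds, with the analogous estimate along any backward characteristic. In particular the cumulative forcing on each characteristic saturates as $t\to\infty$, which is the structural reason why the bounds in the statement do not grow in time.

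To remove $s$ from the right-hand side of \eqref{s_eqn} and $r$ from the right-hand side of \eqref{r_eqn}, I would introduce the weighted Riemann variables $\tilde s:=m^{-1/(2\gamma)}s$ and $\tilde r:=m^{-1/(2\gamma)}r$. Since $m_t=0$, a short computation converts \eqref{s_eqn}--\eqref{r_eqn} into the decoupled system
$$\pp\tilde s=-\frac{c\,m_x}{2\gamma\,m}\,\tilde r,\qquad \pn\tilde r=\frac{c\,m_x}{2\gamma\,m}\,\tilde s,$$
in which $\tilde s$ is driven purely by $\tilde r$ and vice versa. For a target point $(x,t)$, integrating the first equation along the forward characteristic through $(x,t)$ back to $t=0$ controls $|\tilde s(x,t)|$ by $|\tilde s(x^+(0),0)|$ plus a weighted integral of $|\tilde r|$ along that path. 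At each intermediate point $(x^+(\tau_1),\tau_1)$ on this path I would then trace a backward characteristic to $t=0$ and use the second equation to bound $|\tilde r|$ there by initial data plus a weighted integral of $|\tilde s|$. Substituting this two-step expansion back into the first produces an integral inequality for $|\tilde s(x,t)|$ in which the effective Gronwall coupling is of order $\bar V^2$ rather than $\bar V$, because one closed loop through the tilde-system requires one forward and one backward trip and each contributes at most a factor $\bar V$. Applying Gronwall to the resulting two-step inequality, separated into contributions from $\tilde s_0$ and $\tilde r_0$, yields bounds of schematic form $|\tilde s|\le \tilde M_s+\bar V\tilde M_r+\bar V(\bar V\tilde M_s+\bar V^2\tilde M_r)e^{\bar V^2}$. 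Multiplying through by $m^{1/(2\gamma)}\le M_U^{1/(2\gamma)}$ and bounding $\tilde M_s,\tilde M_r$ in terms of $M_s,M_r$ via $M_L\le m\le M_U$ recovers the explicit constants $N_1,N_2$ displayed in the statement.

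With $|s|,|r|$ under control, the identities $u=(s+r)/2$ and $m\eta=(s-r)/2$, combined with $M_L\le m\le M_U$, give the stated pointwise bounds on $u$ and $\eta$. Inverting the monotone relation $\eta=\frac{2\sqrt{K\gamma}}{\gamma-1}\tau^{-(\gamma-1)/2}$ turns the upper bound on $\eta$ into a uniform positive lower bound on $\tau$, hence an upper bound $M_\rho$ on $\rho=1/\tau$. The main obstacle is the absence of smallness on $\bar V$: a naive single-step Gronwall on $|\tilde s|+|\tilde r|$ only closes when $2\bar V<1$. The decoupling afforded by the tilde-variables is precisely what allows the forward-then-backward iteration to produce an effective coupling of order $\bar V^2$ rather than $\bar V$, and the technical heart of the argument is the careful bookkeeping of how the intermediate characteristic footprints depend on the nested time variables, so that the two-step Gronwall inequality actually closes for arbitrary $\bar V$.
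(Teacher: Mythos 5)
The paper itself does not prove Proposition~\ref{Thm_upper}: it quotes it from \cite{G6}, where it is established by a characteristic-method argument of exactly the type you outline, so your setup is the right one. Your reductions are correct: since $m_t=0$, the weights $\tilde s=m^{-1/(2\gamma)}s$, $\tilde r=m^{-1/(2\gamma)}r$ do decouple \eqref{s_eqn}--\eqref{r_eqn} into $\pp\tilde s=-\frac{c\,m_x}{2\gamma m}\,\tilde r$ and $\pn\tilde r=\frac{c\,m_x}{2\gamma m}\,\tilde s$; the change of variables along a monotone characteristic bounds each line integral of $\frac{c|m_x|}{2\gamma m}$ by $\bar V$; and once sup bounds on $\tilde s,\tilde r$ are available, the stated bounds on $u$, $\eta$, and then $\tau\ge 1/M_\rho$, $\rho\le M_\rho$ follow exactly as you say (the exponents $M_U^{1/(2\gamma)}$ and $M_L^{\frac{1}{2\gamma}-1}$ in \eqref{u_rho_bounds} confirm this is the weighting behind $N_1,N_2$; note only that with $M_s,M_r$ bounding $|s_0|,|r_0|$ as in \eqref{m_bounds}, passing to $\tilde s_0,\tilde r_0$ costs a harmless extra factor $M_L^{-1/(2\gamma)}$, so your constants do not literally coincide with the quoted ones).

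The genuine gap is the closing step, which is precisely what the paper calls the ``highly non-trivial'' part of \cite{G6}. After substituting the backward-characteristic representation of $\tilde r$ into the forward one for $\tilde s$, you get $|\tilde s|\le \tilde M_s+\bar V\tilde M_r+(\hbox{a double characteristic integral of }|\tilde s|)$ whose total mass is only known to be of order $\bar V^2$; a sup-based estimate then gives $\sup|\tilde s|\le \tilde M_s+\bar V\tilde M_r+\bar V^2\sup|\tilde s|$, which closes only for $\bar V^2<1$ -- the smallness you set out to avoid -- and ``applying Gronwall'' is not legitimate as stated, because the measure in the double integral is carried by characteristics that depend on the target point and there is no pointwise-in-time control of $c|m_x|/m$, only spatial BV. Iterating the substitution to all orders does not repair this by itself: tracing back, the inner backward characteristic sweeps variation to the right of the outer forward one, the next forward trip sweeps to the left, and so on, so the nested footprints are only constrained by a zigzag partial order in the cumulative-variation variable; the naive majorant series therefore converges only for $\bar V$ below an absolute constant. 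The exponential $e^{\bar V^2}$ in $N_1=(M_s+\bar V M_r)(1+\bar V^2 e^{\bar V^2})$ comes from the geometric bookkeeping you explicitly defer: one must show that the relevant crossings of the entropy variation by the nested families of characteristics are ordered (nested) in the accumulated-variation variable, so that the two-step inequality becomes a genuine Gronwall inequality in that variable. Without establishing that ordering, the argument does not close for large $\bar V$, so as written your proposal stops short of the essential step of the proof it is meant to reconstruct.
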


The second major obstacle appears in the equations of gradient variables. Like in p-system, following the wisdoms of many previous works, c.f. \cite{G3, lax2, linliuyang}, a good choice is 
\begin{align}
  y &:= m^{-\frac{3(3-\gamma)}{2(3\gamma-1)}}\,
       \eta^{\frac{\gamma+1}{2(\gamma-1)}}\,
       (s_x - {\TS\frac{2}{3\gamma-1}}\,m_x\,\eta), \nn\\
  q &:= m^{-\frac{3(3-\gamma)}{2(3\gamma-1)}}\,
       \eta^{\frac{\gamma+1}{2(\gamma-1)}}\,
       (r_x + {\TS\frac{2}{3\gamma-1}}\,m_x\,\eta)\,,
\label{intr main}
\end{align}
which satisfy
\begin{align} 
  \partial_+ y &= a_0- a_2 \, y^2, \nn\\
  \partial_- q &= a_0- a_2 \, q^2,
\label{yq odes}
\end{align}
where
\begin{align}
  {a}_0 &:= {\TS\frac{K_c}{\gamma}}\,
        \big[{\TS\frac{\gamma-1}{3\gamma-1}}\,m\,m_{xx}
         - {\TS\frac{(3\gamma+1) (\gamma-1)}{(3\gamma-1)^2}}\,m_x^2\big]\,
	m^{-\frac{3(3-\gamma)}{2(3\gamma-1)}}\,
         \eta^{\frac{3(\gamma+1)}{2(\gamma-1)}+1},\nn\\
  {a}_2 &:= K_c\,{\TS\frac{\gamma+1}{2(\gamma-1)}}\,
	m^{\frac{3(3-\gamma)}{2(3\gamma-1)}}\,
        \eta^{\frac{3-\gamma}{2(\gamma-1)}}.
\label{adefs}
\end{align}
Clearly, $a_0=0$ if $S_0(x)$ (thus $m(x)$) is a constant. For general adiabatic flows, $a_0$ is not constant 
zero. \eqref{yq odes} are not in Riccati type, and these different ODE structures lead to 
different behaviors of solutions, this is more crucial when initial data are not small perturbation around a constant state. 
Indeed, the classical theory of \cite{lizhoukong0,lizhoukong,Liu1} confirms that, when initial data are arbitrarily small near a constant sate away from vacuum, $y$ and/or $q$ blows up in finite time 
if there is some nonlinear compression (under their notations) at some point $x\in\mathbb{R}$, under a so-called nonlinear wave condition \cite{Liu1}. We also note that the choice of gradient variables in \cite{lizhoukong0,lizhoukong,Liu1} is slightly different from our choices here. Our choices of $y$ and $q$ seem better for large solutions. From now on, we adapt the notions to call the initial data are compressive at 
$x$ if  $y(x,0)<0$ or $q(x, 0)<0$, and rarefactive at $x$ if $y(x,0)\ge 0$ or $q(x, 0)\ge 0$. We first present an example to show that weak initial compression does not necessary develop finite time gradient blowup. 

\begin{example}\label{keyex} For any $C^1$ functions $S(x)$ and $\tau(x)>0$, 
\[
u=0,\quad S=S(x)\quad \text{and}\quad \tau=\tau(x)
\]
is a global $C^1$ (stationary) solution of \eqref{fulleuler0} if in the initial data $S$ and $\tau$ are chosen such that
\beq\label{pxx0}
p_x(\tau(x), S(x))=0.
\eeq
Therefore, if we choose a smooth non-constant  function $S(x)$, then choose 
\beq 
        \tau(x)=K_{\tau, S}\ \exp\left\{\frac{S(x)}{\gamma c_v}\right\},
\eeq 
for any positive constant $K_{\tau, S}$, $(\tau(x), 0, S(x))$ is a smooth stationary solution of \eqref{fulleuler0}. In particular, if one chooses $S(x)$ to be a non-constant periodic function, so is $\tau$, this gives a non-constant solution of \eqref{fulleuler0} which is periodic in both space and time.  In order to 
fulfill the condition in Assumption \ref{BV}, a choice of $S(x)$ is $\frac{1}{x^2+1}$.

For such class of solutions  $(\tau(x), 0, S(x))$, a direct calculation shows 
\beq \label{excom}
-q(x)=y(x)=\textstyle\frac{\gamma-1}{\gamma(3\gamma-1)}m_x m^{\frac{3(\gamma-3)}{2(3\gamma-1)}}
\eta^{\frac{3\gamma-1}{2(\gamma-1)}}\,.
\eeq
which is non-zero at point $x$ if $S'(x)\neq 0$. Therefore, either $q(x)<0$ or $y(x)<0$, but no singularity 
forms in the solution. 
\end{example}

\begin{remark} At first glance, it seems that this example contradicts to  Liu's result \cite{Liu1}. We remark that this class of stationary solutions does not satisfy the nonlinear wave condition in \cite{Liu1} for singularity formation, and therefore there is no contradiction. 
\end{remark}

This example shows that weak nonlinear compression in initial data does not necessarily lead 
to finite time singularity formation especially when data could be large, see also some related discussion in \cite{G6}. This motivates our search for 
a critical strength of the nonlinear compression which offers finite time gradient blowup, which will 
be carried out in the next two subsections. In particular, the Section 3.1 is for the case when initial entropy has finite total variation, c.f. Assumption 3.2; while the Section 3.2 contains results without this condition. 
 In addition to these obstacles, like in the case of isentropic flows, we still need to further generalize our 
method in p-system to non-isentropic case to find a sharp enough time-dependent density lower bound.

\subsection{Singularity formation: $\|S_0(x)\|_{BV}<\infty$}

In this subsection, we assume that the initial data $(\tau_0(x), u_0(x), S_0(x))$ satisfy the conditions in 
Assumptions 3.1 and 3.2, so the estimates in Proposition 3.3 hold. 

The structure of \eqref{yq odes} leads us to study the ratio $\frac{a_0}{a_2}$ which dominates behaviors of 
solutions to \eqref{yq odes}. A direct calculation is carried out as follows

\begin{align}\label{a0overa2}
  {\frac{a_0}{a_2}} =
  {{\TS\frac{2(\gamma-1)^2}{\gamma(\gamma+1)(3\gamma-1)}}\,
        \big(m\,m_{xx}-{\TS\frac{3\gamma+1}{3\gamma-1}}\,m_x^2\big)}\,
    \eta^{\frac{3\gamma-1}{(\gamma-1)}}\,m^{-\frac{3(3-\gamma)}{(3\gamma-1)}}.
\end{align}
We define 
\beq b(x)=S_{xx}-\frac{1}{c_v(3\gamma-1)} S_x^2,
\label{bdef}
\eeq
and it is easy to see that 
\beq \label{bm}
m^2b(x)={2c_v}\big(m\,m_{xx}-{\TS\frac{3\gamma+1}{3\gamma-1}}\,m_x^2\big).
\eeq
Therefore, $b(x)$ has the same sign as $a_0$. 
Also, we note from the definition of $m$ that there is a positive constant $M_3$ such that 
\beq\label{m_xx}
|m\,m_{xx}-{\TS\frac{3\gamma+1}{3\gamma-1}}\,m_x^2|\le M_3.
\eeq
If we define a positive constant $N$ by 

\beq
  N :=
  \begin{cases}
  \sqrt{\frac{2(\gamma-1)^2}{\gamma(\gamma+1)(3\gamma-1)}\,M_3}
        \  E_U^{\frac{3\gamma-1}{2(\gamma-1)}}\  
        M_L^{-\frac{3(3-\gamma)}{2(3\gamma-1)}}, & 1<\gamma<3\,,\\
  \sqrt{\frac{2(\gamma-1)^2}{\gamma(\gamma+1)(3\gamma-1)}\,M_3}
        \,E_U^{\frac{3\gamma-1}{2(\gamma-1)}}\,
        M_U^{-\frac{3(3-\gamma)}{2(3\gamma-1)}}, & \gamma\ge 3\,,
  \end{cases}
\label{Ndef}
\eeq   
we see 
\beq
\frac{a_0}{a_2}\le  N^2.
\eeq

\begin{remark} We remark that $a_2$ is positive, while $a_0$ usually changes sign for physical flows. In fact, 
for physical initial conditions of non-constant $S(x)$, 
the case for $b\le 0$ for all $x\in \mathbb{R}$ cannot happen. Actually, from the relation \eqref{bm}, 
one finds that $b\le 0$ is equivalent to $m\,m_{xx}-{\TS\frac{3\gamma+1}{3\gamma-1}}m_x^2\le 0$, which 
is equivalent to 
$$\big(m^{-\frac{2}{3\gamma-1}}\big)_{xx}\ge 0.$$
Therefore, $m^{-\frac{2}{3\gamma-1}}$ is a convex function over $\mathbb{R}$ if $b\le 0$ for all $x\in \mathbb{R}$. This contradicts the fact that $0<M_L\le m(x)\le M_U$. Similar argument shows that the case for $b\ge 0$ for all $x$ cannot happen either. 
\end{remark}

Like Lemma \ref{lemma_p_2}, we are able to find uniform upper bounds for $y$ and $q$. Since it is a little more complicated than Riccati equation, we can simply compare \eqref{yq odes} with the following ones
\beq
\partial_+ {\tilde y}=a_2(N^2-{\tilde y}^2),\ \quad \partial_- {\tilde q}=a_2(N^2-{\tilde q}^2).
\eeq
Therefore, it is easy to see the following lemma. 
\begin{lemma}\label{full_lemma1}
If $(\tau_0, u_0, S_0)(x)$ satisfy conditions in Assumptions 3.1 and 3.2, it holds for $C^1$ solution 
$(\tau(x, t), u(x, t), S(x))$ of \eqref{fulleuler0} that
\[y(x,t)\leq\max\Big\{N,\ \sup_x\{y(x,0)\}\Big\}=:\bar Y\,,\]
\[ q(x,t)\leq\max\Big\{N,\ \sup_x\{q(x,0)\}\Big\}=:\bar Q\,.\]
\end{lemma}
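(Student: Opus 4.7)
The plan is to establish Lemma~\ref{full_lemma1} by a direct ODE comparison along characteristics, using three inputs already in place: the positivity $a_2>0$ from \eqref{adefs}, the uniform ratio bound $\frac{a_0}{a_2}\le N^2$ derived just before \eqref{Ndef}, and the a priori bounds of Proposition~\ref{Thm_upper}. The two inequalities to be shown are symmetric, so I will only describe the argument for $y\le\bar Y$; the bound $q\le\bar Q$ follows by the identical reasoning applied to the second equation of \eqref{yq odes} along backward characteristics.

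First I would rewrite the first equation of \eqref{yq odes} as a scalar ODE along a forward characteristic $x^+(t)$ emanating from an arbitrary base point $x_0$. Combining $a_2 > 0$ with $a_0 \le N^2 a_2$, this ODE satisfies the differential inequality
\[\frac{d}{dt}\, y(x^+(t),t) \;\le\; a_2(x^+(t),t)\bigl(N^2 - y^2(x^+(t),t)\bigr) \;=\; a_2\,(N-y)(N+y).\]
The second step is to compare with the constant barrier $\tilde y \equiv N$, which solves the corresponding equality. I would split on the initial value $y_0 := y(x_0,0)$: if $y_0\le N$, the standard ODE comparison principle, applicable because $w\mapsto a_2(N^2-w^2)$ is locally Lipschitz and $a_2$ is continuous along the characteristic, gives $y(x^+(t),t)\le N$ for all $t$ in the interval of $C^1$ existence; if $y_0>N$, the above inequality forces $dy/dt<0$ as long as $y>N$ on the characteristic, so $y$ is strictly decreasing there and $y(x^+(t),t)\le y_0$ until (if ever) the trajectory reaches $N$, after which the first case keeps it at or below $N$.

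Combining the two cases and taking the supremum over $x_0$ yields $y(x,t)\le\max\{N,\,\sup_x y(x,0)\}=\bar Y$, which is the first claimed bound; the argument for $q$ is verbatim the same. I do not anticipate any serious obstacle. The only technical point worth checking is that the comparison principle genuinely applies here, namely that $a_2$ is continuous and bounded along each characteristic up to the $C^1$ blowup time; this is immediate from the explicit formula in \eqref{adefs} together with the upper bound on $\eta$ and the two-sided bound $M_L\le m(x)\le M_U$ supplied by Proposition~\ref{Thm_upper} and \eqref{m_bounds}. The argument is noticeably simpler than that of Lemma~\ref{density_low_bound_1-3}, since the uniform barrier $N$ renders the upper bound purely ODE-theoretic and requires no density lower bound at all.
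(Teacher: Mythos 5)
Your proposal is correct and follows essentially the same route as the paper, which proves the lemma by comparing \eqref{yq odes} with the barrier equations $\partial_+\tilde y=a_2(N^2-\tilde y^2)$, $\partial_-\tilde q=a_2(N^2-\tilde q^2)$ using $a_2>0$ and $\frac{a_0}{a_2}\le N^2$. You have merely spelled out the comparison-principle details (the two cases $y_0\le N$ and $y_0>N$) that the paper leaves implicit.
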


The following lemma contains density lower bound estimate. 

\begin{lemma}\label{density_low_bound_3.1}  Let $(\tau(x,t), u(x,t), S(x))$ be a $C^1$ solution of (\ref{fulleuler0}) defined on time interval $[0, T]$ for some $T>0$, with initial data $(\tau_0(x), u_0(x), S_0(x))$ satisfying conditions in Assumptions 3.1 and 3.2. If $1<\gamma<3$,  then for any $x\in \mathbb{R} $ and $t\in [0, T)$,  there is a positive constant $K_6$ depending only on $\gamma$ and $M_U$, such that 
\[
\tau(x, t)\le \big[{\tau_0}^{\frac{3-\gamma}{4}}(x)+K_6({\bar Y}+{\bar Q}) t\big]^{\frac{4}{3-\gamma}}.
\]
\end{lemma}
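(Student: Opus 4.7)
The plan is to mimic the proof of Lemma \ref{density_low_bound_1-3}, exploiting the fact that the entropy corrections in the definitions of $y$ and $q$ cancel upon addition. First, I would note that from \eqref{r_s_def} the combination $s+r=2u$, so $s_x+r_x=2u_x$. Adding the two identities in \eqref{intr main}, the $\pm\frac{2}{3\gamma-1}m_x\eta$ contributions cancel and I get
\begin{equation}
y+q = 2\,m^{-\frac{3(3-\gamma)}{2(3\gamma-1)}}\,\eta^{\frac{\gamma+1}{2(\gamma-1)}}\,u_x.
\end{equation}
Combining this with the mass equation $\tau_t=u_x$ from \eqref{fulleuler0} yields
\begin{equation}
\eta^{\frac{\gamma+1}{2(\gamma-1)}}\,\tau_t \;=\; \tfrac12\, m^{\frac{3(3-\gamma)}{2(3\gamma-1)}}\,(y+q).
\end{equation}

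Next, I would rewrite the left-hand side in terms of $\tau$ alone using the relation $\eta=\tfrac{2\sqrt{K\gamma}}{\gamma-1}\tau^{-(\gamma-1)/2}$ from \eqref{z def}, which gives $\eta^{\frac{\gamma+1}{2(\gamma-1)}}= K_*\,\tau^{-\frac{\gamma+1}{4}}$ for a constant $K_*$ depending only on $\gamma$ and $K$. Hence
\begin{equation}
\tau^{-\frac{\gamma+1}{4}}\,\tau_t \;=\; \tfrac{1}{2K_*}\, m^{\frac{3(3-\gamma)}{2(3\gamma-1)}}\,(y+q).
\end{equation}
Since $1<\gamma<3$, the exponent $\tfrac{3(3-\gamma)}{2(3\gamma-1)}$ is positive, so by \eqref{m_bounds} I can bound $m^{\frac{3(3-\gamma)}{2(3\gamma-1)}}\le M_U^{\frac{3(3-\gamma)}{2(3\gamma-1)}}$. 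Applying Lemma \ref{full_lemma1} to control $y+q\le \bar Y+\bar Q$ then gives a pointwise differential inequality of the form
\begin{equation}
\tau^{-\frac{\gamma+1}{4}}\,\tau_t \;\le\; C(\gamma,K,M_U)\,(\bar Y+\bar Q),
\end{equation}
holding at each $x$ as long as the $C^1$ solution exists.

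Finally, since $1<\gamma<3$ implies $\tfrac{\gamma+1}{4}<1$, I can integrate in $t$ along the vertical line $\{x=\mathrm{const}\}$. Noting that $\tau^{-\frac{\gamma+1}{4}}\tau_t=\tfrac{4}{3-\gamma}\partial_t\bigl(\tau^{\frac{3-\gamma}{4}}\bigr)$, a single time integration from $0$ to $t$ yields
\begin{equation}
\tau^{\frac{3-\gamma}{4}}(x,t) \;\le\; \tau_0^{\frac{3-\gamma}{4}}(x) + K_6\,(\bar Y+\bar Q)\,t,
\end{equation}
with $K_6=\tfrac{3-\gamma}{8K_*}\,M_U^{\frac{3(3-\gamma)}{2(3\gamma-1)}}$, and raising both sides to the power $\tfrac{4}{3-\gamma}$ gives the stated bound. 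No serious obstacle is expected: the only non-routine point is checking that the entropy-gradient corrections drop out of $y+q$, which is exactly the reason the authors introduced the particular symmetric modifications in \eqref{intr main}; once this cancellation is observed, the argument is a direct adaptation of the isentropic computation.
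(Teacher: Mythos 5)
Your proposal is correct and follows essentially the same route as the paper's proof: add the definitions of $y$ and $q$ so the $\pm\frac{2}{3\gamma-1}m_x\eta$ corrections cancel, use $\tau_t=u_x=\frac12(r_x+s_x)$, convert $\eta$ to $\tau$ via \eqref{z def}, bound $m$ by $M_U$ (the exponent being positive for $1<\gamma<3$) and $y+q$ by $\bar Y+\bar Q$ via Lemma \ref{full_lemma1}, then integrate the resulting inequality for $\partial_t\bigl(\tau^{\frac{3-\gamma}{4}}\bigr)$ in time. The only difference is bookkeeping of the factor $\tfrac12$ in the constant $K_6$, which is immaterial.
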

\begin{proof} From the mass equation in \eqref{fulleuler0}, \eqref{r_s_def}, and \eqref{intr main}, it is clear 
that 
\beq 
\begin{split}
\tau_t=u_x &=\frac12 (r_x+s_x)\\
      &=m^{\frac{3(3-\gamma)}{2(3\gamma-1)}}\,
       \eta^{-\frac{\gamma+1}{2(\gamma-1)}}(q+y)\\
     &\le M_U^{\frac{3(3-\gamma)}{2(3\gamma-1)}}\,
       \eta^{-\frac{\gamma+1}{2(\gamma-1)}}({\bar Y}+{\bar Q}),
\end{split}
\eeq
where we have used Lemma \ref{full_lemma1}. With the help of \eqref{z def}, we thus have 
\beq
\tau^{-\frac{\gamma+1}{4}}\tau_t\le  M_U^{\frac{3(3-\gamma)}{2(3\gamma-1)}}\,
       (\TS\frac{2\sqrt{K\gamma}}{\gamma-1})^{-\frac{\gamma+1}{2(\gamma-1)}}({\bar Y}+{\bar Q}),
\eeq
which implies that 
\beq
\tau(x,t)\le \big[\tau_0(x)+ K_6 ({\bar Y}+{\bar Q}) t\big]^{\frac{4}{3-\gamma}},
\eeq
where 
$$K_6=\frac{3-\gamma}{4}M_U^{\frac{3(3-\gamma)}{2(3\gamma-1)}}\,
       (\TS\frac{2\sqrt{K\gamma}}{\gamma-1})^{-\frac{\gamma+1}{2(\gamma-1)}}.$$
\end{proof}

In the following theorem, we  show that $N$ is a critical measurement for the strength of initial nonlinear compression, which leads to finite time gradient blowup of solutions.  

\begin{theorem}
\label{Thm singularity2}
For $\gamma>1$, if $(\tau_0(x), u_0(x), S_0(x))$ satisfy conditions in Assumption 3.1 and 3.2, and if 
for $N$ defined in \eqref{Ndef}, it holds that 
\beq
  \inf_x\;\Big\{\  y(x,0),\ q(x,0)\ \Big\} < -N\,,
\label{yq-N}
\eeq
then for the $C^1$ solutions $(\tau(x,t), u(x,t), S(x))$ of \eqref{fulleuler0},   $|u_x|$ and/or $|\tau_x|$ blows up in finite time.
\end{theorem} 

\begin{remark}\label{re_3.5}
The case when $\gamma\geq3$ was carried out  in \cite{G6}, where density lower bound is not needed. 
\cite{G6} also mentioned a similar result of this Theorem with the a priori assumption of a positive constant lower bound on density. See Theorem 2.4 of \cite{G6} for details. 
\end{remark}

\begin{proof} 
Suppose that \eq{yq-N} holds. Without loss of generality,
we can assume that $\inf_x y(x, 0)<-N$, the case when $\inf_x q(x, 0)<-N$ is similar. Then there exist $\eps>0$ and $x_0\in \mathbb{R}$
such that
\beq\label{SS90}
  y(x_0,0) <- (1+\eps)\,N\,.
\eeq
We denote the forward characteristic passing $(x_0,0)$ as $x^+(t)$. Along this characteristic $x^+(t)$, from the definition of $N$, we have for any $t\ge 0$ such that $x^+(t)$ is well-defined,
\[
 \pp y(x^+(t), t) =a_2(\frac{a_0}{a_2}-y^2)<0, \com{and}  y(x^+(t), t)\le y(x_0,0)  < -(1+\eps)\,N.
\]
Therefore, 
$$\frac{y^2(x^+(t), t)}{(1+\eps)^2}> N^2\ge \frac{a_0}{a_2},$$
which implies that 
\[ 
  \pp y (x^+(t), t)= a_2(\frac{a_0}{a_2}-y^2(x^+(t), t))
       <-{\TS\frac{\eps(2+\eps)}{(1+\eps)^2}}\, a_2\,y^2(x^+(t),t)\,.
\]
Integrating it in time, we get 
\beq
  \frac{1}{y(x^+(t), t))} \ge {\frac{1}{y(x_0,0)} + {\TS\frac{\eps(2+\eps)}{(1+\eps)^2}}
    \int_0^t {a_2}(x^+(\sigma), \sigma)\;d\sigma}\,,
\label{SS9 1}
\eeq 
where the integral is along the forward characteristic. To show $y$ blows up in finite time, it is enough to show that 
\beq\label{blowupa2}
\int_0^\infty \, {a_2(x^+(t), t)}\;dt=\infty\,.
\eeq

When $\gamma\geq 3$, from the definition of $a_2$ in \eqref{adefs}, we see that 
$$a_2\ge  K_c\,{\TS\frac{\gamma+1}{2(\gamma-1)}}\,
	M_L^{\frac{3(3-\gamma)}{2(3\gamma-1)}}\,
        E_U^{\frac{3-\gamma}{2(\gamma-1)}},$$
thus \eqref{blowupa2} follows. 

When $1<\gamma<3$, we read from Lemma \ref{density_low_bound_3.1}, and 
the definition of $a_2$ in \eqref{adefs} that
\[
a_2(x^+(t), t)\geq K_c\,{\TS\frac{\gamma+1}{2(\gamma-1)}}\,
	M_L^{\frac{3(3-\gamma)}{2(3\gamma-1)}}  (\TS\frac{2\sqrt{K\gamma}}{\gamma-1})^{\frac{3-\gamma}{2(\gamma-1)}}\big[M_1^{\frac{3-\gamma}{4}}+K_6({\bar Y}+{\bar Q})t\big]^{-1},
\]
therefore, \eqref{blowupa2} also follows. 

Therefore,  for any $\gamma>1$, $y$ and $s_x$ blow up in finite time. The proof of the theorem is completed.

\end{proof}

\subsection{Singularity formation for general entropy function}
We remark that singularity formation in Euler equations is a local behavior. We will remove several global constraints on entropy functions imposed in last subsection to include physically interesting cases such as  spatially periodic solutions. For this purpose, in this subsection,  we only impose conditions in Assumption 3.1 for the initial data, but not Assumption 3.2. 

Without Assumption 3.2, we do not have the global uniform $L^\infty$ estimates in Proposition \ref{Thm_upper}. However, we note that this result is proved by characteristic method, we thus could 
follow the same argument as in \cite{G6} to establish a local version. 

For this purpose, we fix two initial points $\alpha<\beta\in \mathbb{R}$, denote the forward characteristic starting from $(\alpha, 0)$ by $x_\alpha^+(t)$ and the backward characteristic starting from $(\beta, 0)$ by 
$x_\beta^-(t)$. Assume that $(\tau(x,t), u(x,t), S(x))$ is a $C^1$ solution of \eqref{fulleuler0} on the 
time interval $[0, T_1]$ for some positive $T_1$. In the trapezoid showed in figure 2 below, the top edge $t=T\le T_1$ can shrink into one point, if $x_\alpha^+(T)=x_\beta^-(T)$. We denote this trapezoid domain 
by $\Omega_{\alpha, \beta, T}$, which is determined by the initial interval $[\alpha, \beta]$, $x_\alpha^+(t)$, $x_\beta^-(t)$, and $t=T$. 

	\begin{figure}[htp] \centering\label{fig1}
		\includegraphics[width=.5\textwidth]{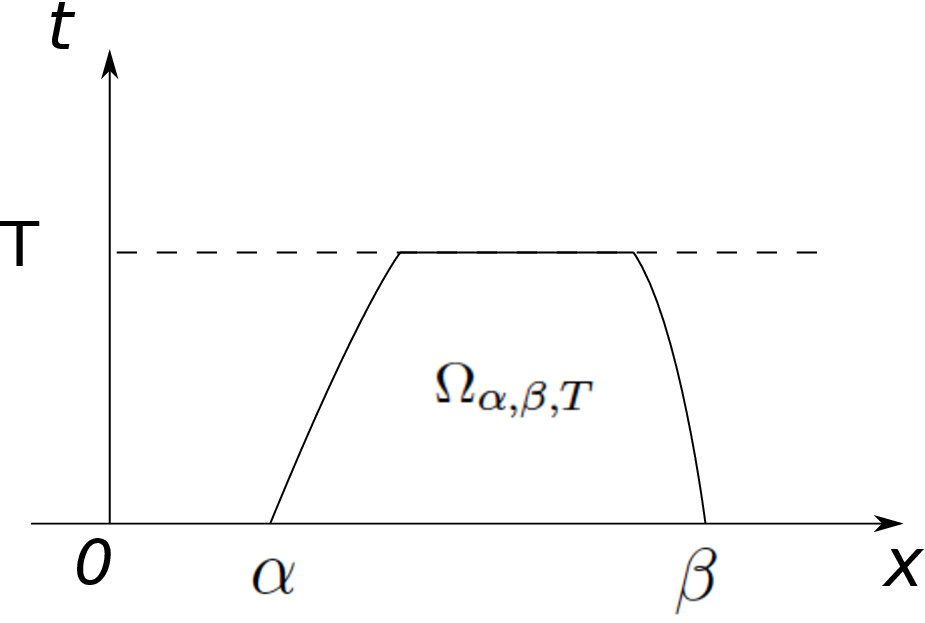}
		\caption{A domain of determination $\Omega_{\alpha,\beta,T}$}
	\end{figure}
With the help of Assumption 3.1, on the interval $[\alpha, \beta]$, one has 
\beq\label{newv}
V_{\alpha,\beta}:= \frac{1}{2c_v}\int_{\alpha}^{\beta}  |S'(\xi)|\;d\xi
     \le \frac{1}{2c_v}M_1|\beta-\alpha|.
\eeq
Therefore, if we define $\bar V_{\alpha,\beta} := \frac{ V_{\alpha,\beta}}{2\gamma}$, and 
\begin{align*}
  {N_1}_{\alpha,\beta} &:= M_s+\bar V_{\alpha,\beta}\,M_r+\bar V_{\alpha,\beta}\,(\bar V_{\alpha,\beta}\,M_s+{\bar V_{\alpha,\beta}}^2\,M_r)
	\,e^{{\bar V_{\alpha,\beta}}^2},\\
 {N_2}_{\alpha,\beta} &:= M_r+\bar V_{\alpha,\beta}\,M_s+\bar V_{\alpha,\beta}\,(\bar V_{\alpha,\beta}\,M_r+{\bar V_{\alpha,\beta}}^2\,M_s)
	\,e^{{\bar V_{\alpha,\beta}}^2},
\end{align*}
the same proof in \cite{G6} gives 

\begin{proposition}
\label{Thm_upperlocal} Assume the initial data $(\tau_0, u_0, S_0)(x)$ satisfy the conditions in Assumption 3.1. If $(\tau(x, t), u(x,t), S_0(x))$ is a  $C^1$ solution of \eqref{fulleuler0} for $t\in[0,T_1)$ for some positive $T_1$, then it holds, for every point $(x,t)\in \Omega_{\alpha,\beta, T}$ and $T\le T_1$, that
\beq\label{u_rho_bounds2}
\begin{split} &|s(x,t)|\le  {N_1}_{\alpha,\beta} {M_U}^{\frac{1}{2\gamma}}, \quad |r(x,t)|\le  {N_2}_{\alpha,\beta}{M_U}^{\frac{1}{2\gamma}}\\
  & |u(x,t)|\leq\frac{ {N_1}_{\alpha,\beta} + {N_2}_{\alpha,\beta} }{2}{M_U}^{\frac{1}{2\gamma}},\\
   &\eta(x,t)\leq\frac{ {N_1}_{\alpha,\beta} + {N_2}_{\alpha,\beta} }{2}{M_L}^{\frac{1}{2\gamma}-1}:={\tilde E}_U.
\end{split}
\eeq
Therefore, there are positive constants ${\tilde M}_{\rho}$ such that 
\beq   \rho\le {\tilde M}_{\rho},\quad \tau\ge \frac{1}{{\tilde M}_{\rho}}.\eeq
\end{proposition}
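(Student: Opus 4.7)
The plan is to rerun the characteristic-based proof of Proposition \ref{Thm_upper} from \cite{G6} essentially verbatim, but restricted to the trapezoid $\Omega_{\alpha,\beta,T}$, exploiting the hyperbolic finite-propagation-speed property. The crucial observation is this: for every interior point $(x,t)\in\Omega_{\alpha,\beta,T}$, both the forward characteristic and the backward characteristic through $(x,t)$ must exit $\Omega_{\alpha,\beta,T}$ through the base $[\alpha,\beta]\times\{0\}$, because $x_\alpha^+$ and $x_\beta^-$ are themselves characteristics and two characteristics of the same family cannot cross while the solution remains $C^1$. Consequently, every characteristic integral that enters the argument of \cite{G6} is evaluated only over points that are swept out of initial data on $[\alpha,\beta]$, and the global total variation $V$ appearing in \eqref{Vdef} may be replaced by the local version $V_{\alpha,\beta}$ defined in \eqref{newv}.

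Concretely, I would fix an arbitrary $(x,t)\in\Omega_{\alpha,\beta,T}$ and integrate \eqref{s_eqn} along the forward characteristic back to the base, and \eqref{r_eqn} along the backward characteristic back to the base. Using Assumption \ref{assu} to control the initial traces by $M_s$ and $M_r$, together with the uniform envelope $M_L\le m(x)\le M_U$, the coupled integral inequalities that result take the same form as in \cite{G6} with $\bar V$ replaced everywhere by $\bar V_{\alpha,\beta}$. Running the same Gronwall-type iteration then produces exactly
\begin{equation*}
|s(x,t)|\le {N_1}_{\alpha,\beta}\,M_U^{\frac{1}{2\gamma}},\qquad |r(x,t)|\le {N_2}_{\alpha,\beta}\,M_U^{\frac{1}{2\gamma}}.
\end{equation*}
The bounds on $|u|$ and on $\eta$ follow algebraically from the relations $u=\frac{s+r}{2}$ and $m\eta=\frac{s-r}{2}$ combined with $m\ge M_L$; the resulting upper bound on $\eta$ then transfers through the strictly decreasing relation \eqref{z def} into the asserted positive lower bound on $\tau$, equivalently an upper bound ${\tilde M}_\rho$ on $\rho$.

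I expect the main obstacle to be not the estimates themselves but the geometric localization: one must verify that, on the lifespan $T\le T_1$ of the $C^1$ solution, the bounding characteristics $x_\alpha^+$ and $x_\beta^-$ exist as $C^1$ curves with $x_\alpha^+(t)\le x_\beta^-(t)$ on $[0,T]$, and that the non-crossing argument for same-family characteristics really does confine any interior characteristic to $\Omega_{\alpha,\beta,T}$ and force it to exit through the base. This is the step that replaces the \emph{global} hypothesis of Assumption \ref{BV} by its \emph{local} counterpart $V_{\alpha,\beta}<\infty$ and is therefore what makes the proposition applicable to situations (such as spatially periodic data) in which $V=\infty$. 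Once this domain-of-dependence step is in place, the remainder is a mechanical substitution $V\mapsto V_{\alpha,\beta}$ in the argument of \cite{G6}, justifying the assertion that ``the same proof'' applies.
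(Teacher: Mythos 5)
Your proposal is correct and follows essentially the same route as the paper, which itself simply observes that the characteristic-method proof of Proposition 3.3 from \cite{G6} localizes to the domain of determination $\Omega_{\alpha,\beta,T}$, with the global variation $V$ replaced by $V_{\alpha,\beta}$ since all characteristics through points of the trapezoid trace back to the base $[\alpha,\beta]$ and stay in the strip over $[\alpha,\beta]$.
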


For later use, we give an estimate on the expected time $T_{\alpha,\beta}$ where $x_\alpha^+(t)$ and $x_\beta^-(t)$ intersect if no singularity develops before this time. Using \eqref{pmc_full}, a simple calculation shows that 
\[T_{\alpha,\beta}\ge \frac{\beta-\alpha}{2}M_U^{-1}{\tilde E}_U^{-\frac{\gamma+1}{\gamma-1}}.
\] 

Using the definition of $M_3$ in \eqref{m_xx}, we define 
\beq
 N_{\alpha,\beta}:=
  \begin{cases}
  \sqrt{\frac{2(\gamma-1)^2}{\gamma(\gamma+1)(3\gamma-1)}\,M_3}
        \  {{\tilde E}_U}^{\frac{3\gamma-1}{2(\gamma-1)}}\  
        M_L^{-\frac{3(3-\gamma)}{2(3\gamma-1)}}, & 1<\gamma<3\,,\\
  \sqrt{\frac{2(\gamma-1)^2}{\gamma(\gamma+1)(3\gamma-1)}\,M_3}
        \,{{\tilde E}_U}^{\frac{3\gamma-1}{2(\gamma-1)}}\,
        M_U^{-\frac{3(3-\gamma)}{2(3\gamma-1)}}, & \gamma\ge 3\ .
  \end{cases}
\label{Nxzdef}
\eeq 
Therefore, 
\beq
\frac{a_0}{a_2}(x, t)\le  { N}_{\alpha, \beta}^2,\  \forall (x, t)\in\Omega_{\alpha, \beta, T}.
\eeq

Furthermore, we define 
\beq
\begin{split} &{\tilde Y}:=\max\Big\{{ N}_{\alpha, \beta},\ \sup_{x\in[\alpha, \beta]} \{y(x,0)\}\Big\},\\
&{\tilde Q}:=\max\Big\{{N}_{\alpha, \beta},\ \sup_{x\in[\alpha, \beta]}\{q(x,0)\}\Big\}.
\end{split}\eeq
It is now clear that, the same method used in the proof of Lemma \ref{density_low_bound_3.1} gives 

\begin{lemma}\label{density_low_bound_3.2}  Let $(\tau(x,t), u(x,t), S(x))$ be a $C^1$ solution of (\ref{fulleuler0}) defined in $\Omega_{\alpha, \beta, T}$, with initial data $(\tau_0(x), u_0(x), S_0(x))$ satisfying conditions in Assumption 3.1. If $1<\gamma<3$,  then for any $(x, t)\in \Omega_{\alpha, \beta, T}$, there is a positive constant ${\tilde K}_6$ depending only on $\gamma$ and $M_U$, such that 
\[
\tau(x, t)\le \big[{\tau_0}^{\frac{3-\gamma}{4}}(x)+{\tilde K}_6({\tilde Y}+{\tilde Q}) t\big]^{\frac{4}{3-\gamma}}.
\]
\end{lemma}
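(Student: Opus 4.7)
The plan is to run the proof of Lemma \ref{density_low_bound_3.1} essentially verbatim, with all global quantities replaced by their local analogues on the trapezoid $\Omega_{\alpha,\beta,T}$. The only step that is not cosmetic is the upgrade of the pointwise upper bounds $y\le\bar Y$, $q\le\bar Q$ of Lemma \ref{full_lemma1} to the local version $y\le\tilde Y$, $q\le\tilde Q$ inside $\Omega_{\alpha,\beta,T}$.

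To obtain these local upper bounds, I would use that $\Omega_{\alpha,\beta,T}$ is a domain of determination: the forward characteristic through any $(x,t)\in\Omega_{\alpha,\beta,T}$, when traced backward in time, stays inside the trapezoid and arrives at a point $(x_0,0)$ with $x_0\in[\alpha,\beta]$. Along this curve the estimate $a_0/a_2\le N_{\alpha,\beta}^2$ quoted just before the lemma holds, so comparing the first equation in \eqref{yq odes} against $\pp\tilde y=a_2(N_{\alpha,\beta}^2-\tilde y^2)$ exactly as in Lemma \ref{full_lemma1} gives $y(x,t)\le\tilde Y$. The bound $q(x,t)\le\tilde Q$ is obtained symmetrically along the backward characteristic through $(x,t)$, which likewise traces back into $[\alpha,\beta]$.

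With these pointwise bounds in hand, the rest of the proof is a transcription of the isentropic case. Summing the two definitions in \eqref{intr main} gives
\[
u_x=\tfrac12\, m^{\frac{3(3-\gamma)}{2(3\gamma-1)}}\, \eta^{-\frac{\gamma+1}{2(\gamma-1)}}\,(y+q),
\]
and since the exponent $\frac{3(3-\gamma)}{2(3\gamma-1)}$ is positive for $1<\gamma<3$, the mass equation $\tau_t=u_x$ combined with $m\le M_U$ and Step 1 yields
\[
\tau_t\le M_U^{\frac{3(3-\gamma)}{2(3\gamma-1)}}\,\eta^{-\frac{\gamma+1}{2(\gamma-1)}}\,(\tilde Y+\tilde Q).
\]
Expressing $\eta$ in terms of $\tau$ via \eqref{z def} converts this into $\tau^{-(\gamma+1)/4}\tau_t\le C(\gamma,M_U)(\tilde Y+\tilde Q)$. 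Because the trapezoid contracts in $t$ (the left face $x_\alpha^+(\cdot)$ moves right, the right face $x_\beta^-(\cdot)$ moves left), the vertical segment $\{(x,s):0\le s\le t\}$ lies in $\Omega_{\alpha,\beta,T}$ whenever $(x,t)$ does; integrating the displayed inequality in $s$ at fixed $x$ over $[0,t]$, which is legitimate since $\frac{\gamma+1}{4}<1$, then produces the claimed bound with $\tilde K_6=\tfrac{3-\gamma}{4}C$.

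I do not anticipate any substantive obstacle. The only point requiring care is the geometric observation that backward-traced characteristics and vertical time segments stay inside $\Omega_{\alpha,\beta,T}$, both of which follow directly from the construction of the trapezoid via $x_\alpha^+$ and $x_\beta^-$. All substantial estimates, in particular the local $L^\infty$ control on $\eta$, have already been established in Proposition \ref{Thm_upperlocal}; the present lemma is then an assembly of these pieces along the lines of Lemma \ref{density_low_bound_3.1}.
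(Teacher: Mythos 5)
Your proof is correct and follows essentially the same route as the paper, which simply transplants the argument of Lemma \ref{density_low_bound_3.1} to the trapezoid $\Omega_{\alpha,\beta,T}$. Your explicit verification of the two geometric facts the paper leaves implicit --- that characteristics traced backward from points of $\Omega_{\alpha,\beta,T}$ land in $[\alpha,\beta]$ (giving the local bounds $y\le\tilde Y$, $q\le\tilde Q$) and that vertical time segments stay in the trapezoid --- is exactly what is needed and introduces no new ideas beyond the paper's.
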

Therefore, we have the following estimate on $a_2$ for any $(x, t)\in \Omega_{\alpha, \beta, T}$
\beq\label{a2new}
a_2(x,t)\ge \begin{cases}
K_c\,{\TS\frac{\gamma+1}{2(\gamma-1)}}\,
	M_L^{\frac{3(3-\gamma)}{2(3\gamma-1)}}\,
        {\tilde E}_U^{\frac{3-\gamma}{2(\gamma-1)}}:=K_8,\quad \text{if}\ \gamma\ge 3,\\
K_7\big[M_2^{\frac{3-\gamma}{4}}+{\tilde K}_6({\tilde Y}+{\tilde Q})t\big]^{-1}, \ \text{if}\ 1<\gamma<3.
\end{cases}
\eeq
where 
$$K_7=K_c\,{\TS\frac{\gamma+1}{2(\gamma-1)}}\,
	M_L^{\frac{3(3-\gamma)}{2(3\gamma-1)}}  (\TS\frac{2\sqrt{K\gamma}}{\gamma-1})^{\frac{3-\gamma}{2(\gamma-1)}}.$$
We further introduce the following constants $K_9$ and $K_{10}$ by 
\beq \label{final constant}
K_{9}={\tilde K}_6({\tilde Y}+{\tilde Q})M_2^{\frac{\gamma-3}{4}}, \quad K_{10}=K_7 M_2^{\frac{\gamma-3}{4}},
\eeq
so that 
\beq \label{a213}
a_2\ge K_{10}[1+K_9t]^{-1}, \ \text{if}\ 1<\gamma<3.
\eeq
We introduce another below constant to assist the measurement on the nonlinear compression. Let positive 
constant $B_{\alpha,\beta}$ be a solution of 
\beq\label{thm3_con}
\frac{B_{\alpha,\beta}(2+B_{\alpha,\beta})}{{(1+B_{\alpha,\beta})}}\ge
\begin{cases}
\left(K_8\,{N_{\alpha,\beta}}{T_{\alpha,\beta}}\right)^{-1},\quad\quad\  \ \gamma\geq3\,,\vspace{.2cm}\\
       \left(\frac{K_{10}}{K_9}\,N_{\alpha,\beta}\,\ln(1+{K_9} T_{\alpha,\beta}) \right )^{-1},\ 
        1<\gamma<3\,.
\end{cases}        
\eeq

\begin{theorem}
\label{Thm singularity3}
Assume the initial data $(\tau_0, u_0, S_0)(x)$ satisfy conditions in Assumption 3.1. If there exists some interval $(\alpha,\beta)$ such that  the initial data satisfy
\beq\label{thm3_1}
\inf_{x\in [\alpha, \beta]} \{y(x,0), q(x, 0)\} < -{ N}_{\alpha,\beta}(1+B_{\alpha,\beta})\,,
\eeq
then  $|u_x|$ and/or $|\tau_x|$ blow up in finite time. 
\end{theorem}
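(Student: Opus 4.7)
The plan is to mimic the proof of Theorem \ref{Thm singularity2}, but carried out entirely inside the domain of determination $\Omega_{\alpha,\beta,T}$ rather than on the whole line, so that the localized $L^\infty$ bounds of Proposition \ref{Thm_upperlocal}, the local density upper bound of Lemma \ref{density_low_bound_3.2}, and the local $a_2$ lower bounds \eqref{a2new}--\eqref{a213} can all be used. The role of the absolute constant $N$ in Theorem \ref{Thm singularity2} is taken by $N_{\alpha,\beta}$, and $B_{\alpha,\beta}$ is the analog of $\varepsilon$ — chosen precisely so that the characteristic on which $y$ (or $q$) starts below $-(1+B_{\alpha,\beta}) N_{\alpha,\beta}$ cannot escape $\Omega_{\alpha,\beta,T}$ without first producing a blowup.

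Concretely, I would proceed as follows. Without loss of generality assume that \eqref{thm3_1} is realized by $y$, so that there exists $x_0\in[\alpha,\beta]$ with $y(x_0,0) < -N_{\alpha,\beta}(1+B_{\alpha,\beta})$, and denote by $x^+(t)$ the forward characteristic issuing from $(x_0,0)$. Assume for contradiction that the $C^1$ solution persists throughout $\Omega_{\alpha,\beta,T_{\alpha,\beta}}$. Along $x^+(t)$, as long as $y^2 > a_0/a_2$, the first equation in \eqref{yq odes} gives $\partial_+ y < 0$, so $|y|$ is strictly increasing; since $a_0/a_2 \le N_{\alpha,\beta}^2$ in $\Omega_{\alpha,\beta,T_{\alpha,\beta}}$ by our choice of $N_{\alpha,\beta}$, the condition $y^2 > (1+B_{\alpha,\beta})^2 N_{\alpha,\beta}^2 \ge (1+B_{\alpha,\beta})^2 (a_0/a_2)$ is preserved. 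Plugging this into \eqref{yq odes} yields
\begin{equation}
\partial_+ y \le -\frac{B_{\alpha,\beta}(2+B_{\alpha,\beta})}{(1+B_{\alpha,\beta})^2}\, a_2\, y^2,
\nonumber
\end{equation}
which, upon dividing by $y^2$ and integrating along $x^+$, gives the Riccati-type inequality
\begin{equation}
\frac{1}{y(x_0,0)} - \frac{1}{y(x^+(t),t)} \ge \frac{B_{\alpha,\beta}(2+B_{\alpha,\beta})}{(1+B_{\alpha,\beta})^2}\int_0^t a_2(x^+(\sigma),\sigma)\,d\sigma.
\nonumber
\end{equation}
Since $y<0$ along the characteristic, this forces $1/y(x^+(t),t)$ to reach $0^-$ in finite time provided the right-hand side eventually exceeds $-1/y(x_0,0) \le 1/[(1+B_{\alpha,\beta})N_{\alpha,\beta}]$.

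The finishing step is to show this threshold is surpassed before time $T_{\alpha,\beta}$, i.e.\ before $x^+(t)$ can possibly exit $\Omega_{\alpha,\beta,T_{\alpha,\beta}}$. Here we split by $\gamma$: for $\gamma\ge 3$ insert the constant lower bound $a_2\ge K_8$ from \eqref{a2new}; for $1<\gamma<3$ insert $a_2\ge K_{10}/(1+K_9 t)$ from \eqref{a213}, whose integral from $0$ to $T_{\alpha,\beta}$ is $(K_{10}/K_9)\ln(1+K_9 T_{\alpha,\beta})$. In both cases the required inequality reduces exactly to \eqref{thm3_con}, which is how $B_{\alpha,\beta}$ was defined, so the threshold is indeed crossed in $[0,T_{\alpha,\beta}]$ and $y\to -\infty$, contradicting the $C^1$ assumption. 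The case where \eqref{thm3_1} is realized by $q$ is symmetric, using the backward characteristic from $(x_0,0)$ and the second equation in \eqref{yq odes}.

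The main obstacle I anticipate is bookkeeping rather than conceptual: one must verify that the chosen characteristic $x^+(t)$ (or the backward one for $q$) truly stays inside $\Omega_{\alpha,\beta,T_{\alpha,\beta}}$ until time $T_{\alpha,\beta}$, so that the localized estimates of Proposition \ref{Thm_upperlocal} and Lemma \ref{density_low_bound_3.2} remain in force and the constants $N_{\alpha,\beta}$, $K_8$, $K_9$, $K_{10}$ control the actual solution along that curve. This is where the estimate $T_{\alpha,\beta} \ge \tfrac{\beta-\alpha}{2} M_U^{-1} \tilde E_U^{-(\gamma+1)/(\gamma-1)}$ given just before \eqref{Nxzdef} is crucial: it guarantees a uniform positive lifespan for $\Omega_{\alpha,\beta,T_{\alpha,\beta}}$ on which the Riccati argument can be closed, and the defining inequality \eqref{thm3_con} for $B_{\alpha,\beta}$ is exactly the quantitative statement that the initial compression is strong enough to beat this lifespan.
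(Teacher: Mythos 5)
Your proposal follows essentially the same route as the paper's own proof: localize to $\Omega_{\alpha,\beta,T_{\alpha,\beta}}$, bound $a_0/a_2$ by $N_{\alpha,\beta}^2$ there, run the Riccati comparison with the factor $\frac{B_{\alpha,\beta}(2+B_{\alpha,\beta})}{(1+B_{\alpha,\beta})^2}$ along the forward (resp.\ backward) characteristic, and then use the $a_2$ lower bounds \eqref{a2new}--\eqref{a213} together with the defining inequality \eqref{thm3_con} to force blowup before $T_{\alpha,\beta}$, exactly as in the paper. The only blemish is a sign slip in your displayed integrated inequality, which should read $\frac{1}{y(x^+(t),t)}-\frac{1}{y(x_0,0)}\ \ge\ \frac{B_{\alpha,\beta}(2+B_{\alpha,\beta})}{(1+B_{\alpha,\beta})^2}\int_0^t a_2(x^+(\sigma),\sigma)\,d\sigma$ (as in \eqref{thm3_2}); the threshold argument you then draw from it is the correct one.
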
 
\begin{remark}
The right hand side of \eqref{thm3_con} only depends on the initial data.
For any given entropy function satisfying conditions in Assumption 3.1, condition \eqref{thm3_con} will be satisfied when $B_{\alpha,\beta}$
is large enough, i.e. $y(x,0)$ or $q(x,0)$ is negative enough. This means that singularity forms in finite time when the initial compression is strong enough somewhere. 

One good choice of $B_{\alpha,\beta}$  is 
\beq\label{thm3_con2}
B_{\alpha,\beta} =\begin{cases}
\left(K_8\,{N_{\alpha,\beta}}{T_{\alpha,\beta}}\right)^{-1},\ \gamma\geq3\,,\vspace{.2cm}\\
       \left(\frac{K_{10}}{K_9}\,N_{\alpha,\beta}\,\ln(1+{K_9} T_{\alpha,\beta}) \right )^{-1},\ 
        1<\gamma<3\,,
\end{cases}        
\eeq

This result is consistent with Theorem \ref{Thm singularity2}. In fact, when the initial entropy has finite total variation, $T_{x,\infty}=\infty$ while $N_{x,\infty}$, ${K_8}$ and ${K_9}$, $K_{10} $ are all finite, so $B_{x,\infty}$ can be arbitrarily small. Hence, if $y(x,0)< -N_{x,\infty}$ or $q(x, 0)<-N_{x, \infty}$ for some $x$, then blowup happens in finite time.
\end{remark}
\begin{proof} We only consider the solution in $\Omega_{\alpha,\beta, T_{\alpha, \beta}}$, and prove
that singularity formation happens in this region. Without loss of generality, we assume that 
there is a point $x_*\in [\alpha, \beta]$ such that $y(x_*, 0)<-{ N}_{\alpha, \beta} (1+B_{\alpha, \beta})$, 
the case for $q$ is similar. Denote the forward characteristic starting from $(x_*, 0)$ by $x^+(t)$. We will show that
$y$ goes to negative infinity along $x^+(t)$ before time $T_{\alpha,\beta}$.  

From \eqref{yq odes}, and the definition of ${ N}_{\alpha,\beta}$ in \eqref{Nxzdef}, it is clear that, along 
$x^+(t)$, for $t\in[0, T_{\alpha, \beta}]$ and as long as solution is $C^1$, it holds that 
\[
\pp y(x^+(t), t) =a_2(\frac{a_0}{a_2}-y^2)<0,\com{and} y(x^+(t), t)< -{ N}_{\alpha,\beta}(1+B_{\alpha,\beta}).
\]

Therefore, 
$$\frac{y^2(x^+(t), t)}{(1+B_{\alpha, \beta})^2}> {N}_{\alpha, \beta}^2\ge \frac{a_0}{a_2},$$
which implies that 
\[ 
  \pp y (x^+(t), t)= a_2(\frac{a_0}{a_2}-y^2(x^+(t), t))
       <-{\TS\frac{B_{\alpha, \beta}(2+B_{\alpha, \beta})}{(1+B_{\alpha,\beta})^2}}\, a_2\,y^2(x^+(t), t)\,.
\]
Integrating it in time, we get 
\beq\label{thm3_2}
  \frac{1}{y(x^+(t), t)} \ge {\frac{1}{y(x_*,0)} +
    \frac{B_{\alpha,\beta}(2+B_{\alpha,\beta})}{(1+B_{\alpha,\beta})^2}\int_0^t\, {a_2}(x^+(\sigma), \sigma)\;d\sigma}\,.
\eeq
where the integral is along the forward characteristic. 

Hence the blowup happens at a time $t_1$ when the right hand side of (\ref{thm3_2})
equals to zero, i.e. when
\beq
-\frac{1}{y(x_*, 0)}=\frac{B_{\alpha,\beta}(2+B_{\alpha,\beta})}{{(1+B_{\alpha,\beta})^2}}\int_0^{t_1}\, {a_2}(x^+(\sigma), \sigma)\;d\sigma\,.
\eeq
It is clear from the estimates on $a_2$ in \eqref{a2new} that such a finite $t_1$ exists. However, we still 
need to show that $t_1<T_{\alpha,\beta}$. From \eqref{thm3_1}, we only need to show that 
\beq\label{thm3_3}
\frac{1}{N_{\alpha,\beta}}\leq\frac{B_{\alpha,\beta}(2+B_{\alpha,\beta})}{{(1+B_{\alpha,\beta})}}\int_0^{T_{\alpha,\beta}}\, {a_2(x^+(t), t)}\;dt\,.
\eeq

When  $\gamma\geq 3$, we read from \eqref{a2new} that $a_2\ge K_8$, \eqref{thm3_3} follows directly from 
\eqref{thm3_con}.

When  $1<\gamma< 3$, we read from \eqref{a213} that 
$$a_2\ge K_{10}[1+{K}_9 t]^{-1},$$
therefore 
\beq\begin{split}
&\frac{B_{\alpha,\beta}(2+B_{\alpha,\beta})}{{(1+B_{\alpha,\beta})}}\int_0^{T_{\alpha,\beta}}\, {a_2(x^+(t), t)}\;dt\\
&\ge \frac{B_{\alpha,\beta}(2+B_{\alpha,\beta})}{{(1+B_{\alpha,\beta})}}\int_0^{T_{\alpha,\beta}}  K_{10}[1+K_9t]^{-1}\ dt\\
&=\frac{B_{\alpha,\beta}(2+B_{\alpha,\beta})}{{(1+B_{\alpha,\beta})}}\frac{K_{10}}{K_9}\ln (1+K_9T_{\alpha,\beta} ),
\end{split}
\eeq
which, together with \eqref{thm3_con}, implies \eqref{thm3_3}.
Hence we complete the proof of this theorem.

\end{proof}

\subsection{Further discussion\label{section_3.5}}

In Subsections 3.1-3.2, we showed that if the initial compression is strong enough, singularity develops in finite time for solutions of \eqref{fulleuler0}. It is also evident by the Example \ref{keyex} that relatively strong compression is necessary to guaranty finite time blowup occurs. One of the questions would be, say in Theorem \ref{Thm singularity2}, does the constant $N$ measures the critical strength efficiently? We now show 
this $N$ is the arguably  best possible one. 

We now revisit the stationary solutions 
\beq\label{ss}(\tau(x), u(x), S(x))=(K_{\tau, S}\ \exp\left\{\frac{S(x)}{\gamma c_v}\right\}, 0, S(x)),
\eeq
for any smooth function $S(x)$ satisfying Assumptions 3.1-3.2, and any positive constant $K_{\tau, S}$, constructed in Example \ref{keyex} . We also recall \eqref{excom} 
\beq\label{final_remark1}
-q(x)=y(x)=\textstyle\frac{\gamma-1}{\gamma(3\gamma-1)}m_x m^{\frac{3(\gamma-3)}{2(3\gamma-1)}}
\eta^{\frac{3\gamma-1}{2(\gamma-1)}}\,.
\eeq
We remark that now everything is fixed except the choice of $S(x)$. $m$, $\eta$ are both fucntions of $S(x)$. 
For convenience, we will use $m(x)=e^{\frac{S}{2c_v}}$ for our argument below.

Note  from \eqref{a0overa2} that if 
\beq\label{a0>0} m\,m_{xx}-{\TS\frac{3\gamma+1}{3\gamma-1}}\,m_x^2\geq 0,
\eeq
$N$ is the best possible upper bound of
\beq\label{final_remark2}
  \TS\sqrt{\frac{a_0}{a_2}} =
  \sqrt{{\frac{2(\gamma-1)^2}{\gamma(\gamma+1)(3\gamma-1)}}\,
        \big(m\,m_{xx}-{\TS\frac{3\gamma+1}{3\gamma-1}}\,m_x^2\big)}\,
    \eta^{\frac{3\gamma-1}{2(\gamma-1)}}\,m^{-\frac{3(3-\gamma)}{2(3\gamma-1)}}\,.
\eeq
We now show that there exists some $S(x)$ (or equivalently $m(x)$) so that initially $|y(x)|=|q(x)|=\TS\sqrt{\frac{a_0}{a_2}}$ at 
some $x$. Comparing \eqref{final_remark1} with \eqref{final_remark2}, we see this happens when
\[\textstyle(\frac{\gamma-1}{\gamma(3\gamma-1)})^2m_x^2={\TS\frac{2(\gamma-1)^2}{\gamma(\gamma+1)(3\gamma-1)}}\,
        \big(m\,m_{xx}-{\TS\frac{3\gamma+1}{3\gamma-1}}\,m_x^2\big),\]
which is equivalent to
\beq\label{final_remark3}
mm_{xx}-{\TS\frac{3\gamma+1}{3\gamma-1}}\,m_x^2-{\TS\frac{\gamma+1}{2\gamma(3\gamma-1)}} m_x^2 =0.
\eeq
It is clear that if $m(x)$ satisfies \eqref{final_remark3}, it satisfies \eqref{a0>0}. A direct calculation shows that 
for positive $m$ (or equivalently a bounded $S(x)$), \eqref{final_remark3} is equivalent to 
\beq\label{final_remark4}
(m^\theta)_{xx}=0,\ \text{for}\quad \theta=1-\frac{6\gamma^2+3\gamma+1}{2\gamma(3\gamma-1)}.
\eeq
Clearly, for any point ${\bar x}\in \mathbb{R}$, we are able to choose a smooth function $m(x)$ such that $m^\theta(x)$ reaches its inflection point at ${\bar x}$ and $m'({\bar x})\neq 0$. Indeed, using the formula 
$\tau=K_{\tau, S}\exp\left\{\frac{S}{\gamma c_v}\right\}$ along with \eqref{final_remark1}, 
$$-q(x)=y(x)=K_{\tau, S}^{-\frac{3\gamma-1}{4}}\frac{\gamma-1}{\theta \gamma(3\gamma-1)}(m^{\theta})_x.$$
Thus it confirms that ${\bar x}$ is exactly the (local) maximum of $|q(x)|=|y(x)|$, which can be easily chosen 
as the global maximum for a class of $m(x)$. 

From the analysis above, it is clear that the constant $N$ is almost an optimal measurement on the strength of 
compression for finite time singularity formation in general, because, if the condition \eqref{yq-N} fails in Theorem \ref{Thm singularity2}, then there exists a class of initial data admitting global stationary solutions of the form \eqref{ss}, such that 
\beq
  \inf_x\;\Big\{\  y(x,0),\ q(x,0)\ \Big\} = -N\, .
\eeq
In these examples, the maximum strength of 
compression $N$ is attained. 

\appendix

\section{p-system with general pressure law\label{general_p}}
In this subsection, we generalize the method developed in previous section to the following Cauchy problem for p-system,

\begin{equation}\label{gp}
\begin{cases}
&\tau_t-u_x=0\, ,\\
&u_t+p_x=0\, , \\
&\tau(x,0)=\tau_0(x),\ u(x, 0)=u_0(x),
\end{cases}
\end{equation}
with general pressure law $p(\tau)\in C^3(0,\infty)$ satisfying
\beq\label{p_gen}
p_\tau<0,\ \  p_{\tau\tau}>0
\eeq
and
\beq\label{p_gen2}
 \lim_{\tau\rightarrow 0}p(\tau)=\infty,\ \
\lim_{\tau\rightarrow\infty}p(\tau)=0 
\com{and} \int_1^\infty \sqrt{-p_\tau}~d\tau<\infty.
\eeq
Here condition \eqref{p_gen} is dictated by physics when one uses p-system to model gas dynamics, c.f. \cite{menikoff}. Furthermore, we assume
that 
\beq\label{c_con_gen}
\int_0^1 \sqrt{-p_\tau}~d\tau=\infty 
\eeq
which includes the $\gamma$-law pressure case. We also identified the following condition: 

\begin{assumption} There exists some positive constant $A$, such that for any $\tau>0$,
\beq\label{condition1}
(5+A)(p_{\tau\tau})^2-4p_\tau p_{\tau\tau\tau}\geq 0\,.
\eeq
\end{assumption} 

\begin{remark} This Condition \eqref{condition1} is fairly mild because the constant $A$ can be arbitrarily large. 
For example, the $\gamma$-law pressure $p=k\tau^{-\gamma}$ with $\gamma>0$ satisfies conditions \eqref{condition1} and \eqref{p_gen}, and the pressure $p=k\tau^{-\gamma}$ with $\gamma>1$ satisfies conditions \eqref{condition1}
and \eqref{p_gen}$\sim$\eqref{c_con_gen}.
\end{remark}

Applying Lax's method in Sections 2.1-2.2 to this case (the detailed calculations can be found in \cite{G4}), it is not hard to find the Lagrangian sound speed is 
\[
c\equiv c(\tau)=\sqrt{-p_\tau},
\]
and Riemann invariants
\[
s:=u+\displaystyle\int_\tau^{1} c(\tau) d\tau \com{and} r:=u-\displaystyle\int_\tau^1 c(\tau) d\tau,
\]
which satisfy 
\beq\label{sr_con_gen}
\partial_+ s=0\com{and}  \partial_-r=0\,, \quad \partial_{\pm}=\partial_t\pm c \partial_x.
\eeq
If we define 
\beq
y:=\sqrt{c}\,s_x,\qquad q:=\sqrt{c}\,r_x,\label{GE yq def}
\eeq
then 
\begin{eqnarray} \partial_+y&=&-a({\tau})
y^2, \label{new ode1}\\ \partial_-q&=&-a({\tau}) q^2,
\label{new ode2}
\end{eqnarray} 
where \begin{eqnarray}
a({\tau}):=\frac{p_{{\tau}{\tau}}}{4(-p_{\tau})^{\frac{5}{4}}}>0\label{GE a2}.
\end{eqnarray}

Similar to  Lemma \ref{lemma_p_2}, define non-negative constants
$$Y=\max\Big\{0,\ \sup_x\{y(x,0)\}\Big\}, \quad Q= \max\Big\{0,\ \sup_x\{q(x,0)\}\Big\},$$
we have 
{\begin{lemma}\label{lemma_p_2_gen} If $(\tau_0(x), u_0(x))$ satisfy Assumption \ref{p-assumption}, then it holds for $C^1$ solution $(\tau, u)(x,t)$  of (\ref{gp}) that
\[y(x,t)\leq Y, \quad \text{and} \quad  q(x,t)\leq Q\,.\]
\end{lemma}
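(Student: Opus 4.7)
The plan is to exploit the Riccati structure of \eqref{new ode1}--\eqref{new ode2} and propagate the pointwise one-sided bound $y \le Y$ from the initial line along characteristics. Because $a(\tau) > 0$ by \eqref{GE a2} (using only $p_\tau < 0$ and $p_{\tau\tau} > 0$ from \eqref{p_gen}), the right-hand side $-a(\tau) y^2$ is always non-positive. Hence $y$ is non-increasing along every forward characteristic, regardless of its sign, which is the only monotonicity I need.

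Concretely, I would fix an arbitrary $(x,t)$ in the domain of the $C^1$ solution and let $x^+(\cdot)$ be the forward characteristic with $x^+(t)=x$, tracing it back to its foot $x_0 := x^+(0)$. Set $Y(s) := y(x^+(s), s)$ on $[0,t]$, so that \eqref{new ode1} becomes the scalar ODE
\[
Y'(s) = -a\bigl(\tau(x^+(s),s)\bigr)\, Y(s)^2 \le 0.
\]
Monotonicity gives $y(x,t) = Y(t) \le Y(0) = y(x_0,0)$. If $y(x_0,0) \ge 0$, then $y(x_0,0) \le \sup_\xi y(\xi,0) \le Y$ by definition of $Y$; if $y(x_0,0) < 0$, then $y(x,t) < 0 \le Y$ trivially. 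In either case $y(x,t) \le Y$, and since $(x,t)$ was arbitrary the bound on $y$ follows. The estimate $q(x,t) \le Q$ is obtained by the same argument, replacing forward by backward characteristics and using \eqref{new ode2}.

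There is no genuine obstacle: the result is essentially a textbook ODE comparison. The only point worth flagging is that one must know the forward characteristic through $(x,t)$ reaches back to $t=0$ and that $Y(s)$ remains finite on $[0,t]$; both follow from the hypothesis that $(\tau,u)$ is a $C^1$ solution on the relevant time interval, which precludes the Riccati ODE from entering its blowup regime before time $t$. Note that the argument is one-sided — no analogous lower bound on $y$ holds, which is precisely why compressive initial data ($y(x_0,0) < 0$) remain the source of finite-time gradient blowup in the subsequent analysis.
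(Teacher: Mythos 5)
Your proof is correct and is essentially the paper's own argument: the paper simply invokes the ODE comparison principle for the Riccati equations $\partial_+ y=-a(\tau)y^2$, $\partial_- q=-a(\tau)q^2$ with $a(\tau)>0$, which is exactly your monotonicity-along-characteristics observation giving $y(x,t)\le y(x_0,0)\le Y$ and likewise for $q$. The technical points you flag (tracing the characteristic back to $t=0$ and finiteness of $y$ there) are indeed supplied by the $C^1$ solution hypothesis, just as you say.
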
}

Then we could state our theorem for the general pressure law case. 

\begin{theorem}\label{p_sing_gen}
Assume that initial data  $(\tau_0(x), u_0(x))$ satisfy Assumption 2.1.  The pressure satisfies \eqref{p_gen}$\sim$\eqref{c_con_gen} and the Assumption 2.9. 
Then global-in-time classical solution of (\ref{gp}) exists if and only if
\beq\label{p_lemma_con_gen}
 s_x(x,0)\geq0 \com{and}  r_x(x,0)\geq 0,\com{for all} x\in \mathbb{R}\,.
\eeq
\end{theorem}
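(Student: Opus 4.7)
The plan is to carry out the proof of Theorem 2.13 in close parallel to that of Theorem 2.7, using the Riccati equations \eqref{new ode1}--\eqref{new ode2} for $y,q$, the upper bounds of Lemma 2.11, and a time-dependent density upper bound obtained from the mass equation. The new ingredient is Assumption 2.9, which is invoked only in the necessity direction to propagate the divergence of $\int a\, d\sigma$ along characteristics.

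First I would establish the analog of Lemma 2.6 for general pressure. From $y = \sqrt c\, s_x$, $q = \sqrt c\, r_x$ and $\tau_t = u_x = (s_x + r_x)/2$, Lemma 2.11 yields $\sqrt{c(\tau)}\, \tau_t = (y+q)/2 \leq (Y+Q)/2$. Setting $G(\tau) := \int_1^\tau (-p_\sigma(\sigma))^{1/4}\, d\sigma$, integration in $t$ at fixed $x$ gives $G(\tau(x,t)) \leq G(\tau_0(x)) + (Y+Q)t/2$, hence $\tau(x,t) \leq \tilde\tau(t) := G^{-1}(G(M_1) + (Y+Q)t/2)$. Combined with the uniform positive lower bound $\tau \geq \tau_{\min}$ from \eqref{c_con_gen} and the $L^\infty$ boundedness of the Riemann invariants, this yields $c(\tau) \geq c_T > 0$ on any finite $[0,T]$.

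For sufficiency, \eqref{p_lemma_con_gen} gives $y(x,0), q(x,0) \geq 0$, and the Riccati structure with $a > 0$ preserves non-negativity of $y$ and $q$ along their respective characteristics; combining this with Lemma 2.11 gives $0 \leq y \leq Y$ and $0 \leq q \leq Q$. The density bound then yields uniform bounds on $|s_x| = |y|/\sqrt c$ and $|r_x| = |q|/\sqrt c$ at any finite time, and standard continuation extends the local $C^1$ solution globally. For necessity, assume $s_x(x^*, 0) < 0$ (the backward case is symmetric), so $y(x^*, 0) < 0$; then along the forward characteristic through $(x^*, 0)$, $1/y = 1/y(x^*, 0) + \int_0^t a\, d\sigma$, and we must show this integral diverges.

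Direct computation gives $(\ln a)_\tau = p_{\tau\tau\tau}/p_{\tau\tau} - 5 p_{\tau\tau}/(4 p_\tau)$, and dividing \eqref{condition1} by the negative quantity $4 p_\tau p_{\tau\tau}$ yields $p_{\tau\tau\tau}/p_{\tau\tau} \geq (5+A)p_{\tau\tau}/(4 p_\tau)$; hence $(\ln a)_\tau \geq (A/4)(\ln(-p_\tau))_\tau$, i.e.\ $a(\tau)(-p_\tau(\tau))^{-A/4}$ is non-decreasing in $\tau$. Since $\tau \in [\tau_{\min}, \tilde\tau(t)]$ along the characteristic and $(-p_\tau)^{A/4}$ is decreasing in $\tau$, one gets $a(\tau(x^+(t), t)) \geq B(\tau_{\min})(-p_\tau(\tilde\tau(t)))^{A/4}$ with $B(\tau) := a(\tau)(-p_\tau(\tau))^{-A/4}$. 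Changing variables from $t$ to $\tilde\tau$ via $dt = (-p_{\tilde\tau})^{1/4}/C\, d\tilde\tau$ with $C = (Y+Q)/2$ reduces $\int_0^\infty a\, d\sigma$ to a positive multiple of $\int_{M_1}^\infty (-p_\tau)^{(A+1)/4}\, d\tau$, and the divergence of this tail integral gives the desired blowup. The main obstacle lies precisely here: in the $\gamma$-law case the explicit power law makes the tail a logarithm, but for general $p$ one must take the smallest admissible $A$ in Assumption 2.9 and verify that $\int^\infty (-p_\tau)^{(A+1)/4}\, d\tau = +\infty$ from the pressure hypotheses \eqref{p_gen}--\eqref{c_con_gen}, which is a delicate calibration between Assumption 2.9 and the decay of $-p_\tau$ at infinity.
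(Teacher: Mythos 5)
Your necessity argument does not close, and the loss happens exactly where you flag it. After deriving the monotonicity of $B(\tau)=a(\tau)(-p_\tau)^{-A/4}$ you freeze $B$ at $\tau_{\min}$ and evaluate $(-p_\tau)^{A/4}$ at the worst-case volume $\tilde\tau(t)$, which forces you to require $\int^\infty(-p_\tau)^{(A+1)/4}\,d\tau=\infty$. This does \emph{not} follow from \eqref{p_gen}$\sim$\eqref{c_con_gen} together with Assumption 2.9: the smallest admissible $A$ in \eqref{condition1} is dictated by the whole range of $\tau$ (in particular by the behavior near $\tau=0$), while the tail integral is dictated only by the decay of $-p_\tau$ as $\tau\to\infty$. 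For instance, a pressure behaving like a $\gamma_1$-law near $\tau=0$ and like a $\gamma_2$-law for large $\tau$, with $1<\gamma_1<\gamma_2$, is admissible, its minimal $A$ is about $\frac{3-\gamma_1}{\gamma_1+1}$, and then $(-p_\tau)^{(A+1)/4}\sim\tau^{-\frac{\gamma_2+1}{\gamma_1+1}}$ is integrable at infinity; your lower bound for $\int a\,dt$ is then finite and gives no blowup, although the theorem still holds. The paper's proof avoids this calibration altogether: \eqref{condition1} is precisely equivalent to the differential inequality $\bigl(\frac{4(-p_\tau)^{5/4}}{p_{\tau\tau}}\bigr)_\tau\le A(-p_\tau)^{1/4}$, i.e.\ $\bigl(1/a\bigr)_\tau\le A(-p_\tau)^{1/4}$ (this is \eqref{gen_final2}); composing with $\tau(x,t)$ and integrating in $\tau$ from $\tau_{\min}$ to $\tau(x,t)$, and then inserting your own mass-equation estimate $\int_{\tau_{\min}}^{\tau(x,t)}(-p_\tau)^{1/4}\,d\tau\le K_4+K_5t$ (which is exactly \eqref{gen_final}), gives directly $\frac{1}{a(\tau(x,t))}\le K_2+K_3t$ along the forward characteristic, hence $\int_0^\infty a\,dt=\infty$. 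No inversion of $G$, no tail divergence, and no choice of a ``smallest'' $A$ is ever needed.

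A secondary problem is the object $\tilde\tau(t)=G^{-1}\bigl(G(M_1)+(Y+Q)t/2\bigr)$ itself: the hypotheses only give $\int_1^\infty\sqrt{-p_\tau}\,d\tau<\infty$ and say nothing about $\int_1^\infty(-p_\tau)^{1/4}\,d\tau$, so $G$ may be bounded above (already for the $\gamma$-law with $\gamma\ge3$, or for exponentially decaying $-p_\tau$), in which case $\tilde\tau(t)$ is undefined for large $t$ and the inequality $G(\tau(x,t))\le G(\tau_0)+Ct$ yields no upper bound on $\tau$. This undercuts both your change of variables in the necessity part and your sufficiency claim that $c\ge c_T>0$ on any finite $[0,T]$. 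The paper sidesteps this: for the rarefactive (sufficiency) direction it obtains the time-dependent upper bound on $\tau$ from Lin's result \cite{lin2}, and in the compressive direction it only uses the $G$-type estimate in the un-inverted form \eqref{gen_final} described above.
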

\begin{remark}
It is clear from our proof below that for 
the singularity formation, conditions \eqref{p_gen2}$\sim$\eqref{c_con_gen} are not necessary.
\end{remark}
\begin{proof} As usual, one reads from \eqref{sr_con_gen} that $\|(r, s)(x,t)\|_{L^{\infty}}\le \|(r, s)(x, 0)\|_{L^\infty}$. Therefore, on finds uniform $L^\infty$ bounds for $u(x,t)$ and $\displaystyle\int_\tau^{1} c(\tau) d\tau$. It then follows from  \eqref{c_con_gen} that there are positive constant $\tau_{min}$ and 
$c_{max}$ depending only on the initial data such that  
\[
\tau(x, t)\ge  \tau_{min},\ \quad c(x,t)\le c_{max}.
\]

If condition \eqref{p_lemma_con_gen} holds, the global existence could 
be proved in an exactly same way as in the first part of the proof of Theorem \ref{p_sing_thm} together with the positive lower bound on density provided in \cite{lin2}.

If condition \eqref{p_lemma_con_gen} fails, by a similar argument as in the second part of the proof of Theorem \ref{p_sing_thm}, in order to prove singularity formation in finite time, it is sufficient to show
\beq\label{int_a_infty_gen}
\int_0^\infty a({\tau}(x(t),t)~dt=\infty\, ,
\eeq
which is true if we can prove 
\beq\label{a_reci}
\frac{1}{a(\tau(x,t))}= \frac{4(-p_{\tau})^{\frac{5}{4}}}{p_{{\tau}{\tau}}}\leq K_2+K_3 t
\eeq
for some positive constants $K_2$ and $K_3$. Indeed,  a direct computation gives
\[
\frac{1}{2}(y+q)=\frac{1}{2}\sqrt{c}(s_x+r_x)=\sqrt{c}\,u_x=\sqrt{c}\,\tau_t.
\]
Then by Lemma \ref{lemma_p_2_gen}, we have 
\[
\Big(\int_{\tau_{min}}^{\tau}(-p_\tau(\tau))^{\frac{1}{4}}~d\tau\Big)_t=\Big(\int_{\tau_{min}}^{\tau}\sqrt{c(\tau)}~d\tau\Big)_t=\frac{1}{2}(y+q)\leq\frac{1}{2}(Y+Q).
\]
Hence
\beq\label{gen_final}
\int_{\tau_{min}}^{\tau(x,t)}(-p_\tau(\tau))^{\frac{1}{4}}~d\tau\leq \int_{\tau_{min}}^{\tau(x,0)}(-p_\tau(\tau))^{\frac{1}{4}}~d\tau +\frac{1}{2}(Y+Q)t\leq K_4+K_5 t
\eeq
for some positive constants $K_4$ and $K_5$.

Using the fact $\tau>\tau_{min}>0$ and \eqref{gen_final},  \eqref{a_reci} follows if we can show that 
\beq\label{gen_final2}
\Big(\frac{4(-p_{\tau})^{\frac{5}{4}}}{p_{{\tau}{\tau}}}\Big)_\tau\leq A(-p_\tau(\tau))^{\frac{1}{4}},
\eeq
for some positive constant $A$. Since \eqref{gen_final2} follows from \eqref{condition1} in Assumption 2.9, we finish the proof of this theorem. 
\end{proof}

\section*{Acknowledgement}
{ We sincerely appreciate Professor Alberto Bressan  for his very helpful suggestions and discussions when we wrote this paper. We also appreciate the reviewers' helpful comments. The research of R. Pan was supported in part by NSF under grant DMS-1108994. The research of S. Zhu was partially supported 
by National Natural Science Foundation of China under grant 11231006, Natural Science Foundation of Shanghai under grant 14ZR1423100 and  China Scholarship Council.}

\end{document}